\newcommand{\xx}{{\bf x}}
\newcommand{\vv}{{\bf v}}
\newcommand{\yy}{{\bf y}}
\newcommand{\ab}{{\bf a}}
\newcommand{\bb}{{\bf b}}
\newcommand{\cc}{{\bf c}}
\newcommand{\A}{{\bf A}}
\newcommand{\X}{{\bf X}}
\newcommand{\I}{{\bf I}}
\newcommand{\B}{{\bf B}}
\newcommand{\C}{{\bf C}}
\newcommand{\Z}{{\bf Z}}
\newcommand{\qq}{{\bf q}}
\newcommand{\xt}{{\bf x}}
\newcommand{\MATLAB}{\textsc{Matlab}\xspace}
\newcommand\norm[1]{\left\lVert#1\right\rVert}
\crefname{hypothesis}{Hypothesis}{Hypotheses}
\title{Nonnegative Tensor Decomposition Via
Collaborative Neurodynamic Optimization
}
\author{Salman Ahmadi-Asl\thanks{Lab of Machine Learning and Knowledge Representation, Innopolis University, Innopolis, Russia, and Skolkovo Institute of Science and Technology, Center for Artificial Intelligence Technology, Moscow, Russia,
(\email{s.ahmadiasl@innopolis.ru}).}
\and Valentin Leplat\thanks{Innopolis University, Innopolis, Russia, (\email{v.leplat@innopolis.ru}), ``The work of V.L was done with Skoltech, finalized at Innopolis''.}  \and Anh-Huy Phan\thanks{Skolkovo Institute of Science and Technology, Center for Artificial Intelligence Technology, Moscow, Russia, (\email{a.phan@skoltech.ru}).} \and Andrzej Cichocki\thanks{
Skolkovo Institute of Science and Technology, Center for Artificial Intelligence Technology, Moscow, Russia and Systems Research Institute of Polish Academy of Science, Warsaw, Poland
(\email{cichockiand@gmail.com}).} 
}
\newcommand*{\addFileDependency}[1]{
  \typeout{(#1)}
  \@addtofilelist{#1}
  \IfFileExists{#1}{}{\typeout{No file #1.}}
}
\newcommand*{\myexternaldocument}[1]{%
    \externaldocument{#1}%
    \addFileDependency{#1.tex}%
    \addFileDependency{#1.aux}%
}
\begin{document}

\maketitle

\begin{abstract}
This paper introduces a novel collaborative neurodynamic model for computing nonnegative Canonical Polyadic Decomposition (CPD). The model relies on a system of recurrent neural networks to solve the underlying nonconvex optimization problem associated with nonnegative CPD. Additionally, a discrete-time version of the continuous neural network is developed. To enhance the chances of reaching a potential global minimum, the recurrent neural networks are allowed to communicate and exchange information through particle swarm optimization (PSO). Convergence and stability analyses of both the continuous and discrete neurodynamic models are thoroughly examined. Experimental evaluations are conducted on random and real-world datasets to demonstrate the effectiveness of the proposed approach.
\end{abstract}

\begin{keywords}
Neurodynamic, Canonical Polyadic Decomposition, Particle swarm optimization
\end{keywords}

\begin{AMS}
  15A69, 65F99, 90C26
\end{AMS}

\section{Introduction}
Tensors are a generalization of matrices and vectors, and tensor decompositions and tensor networks are utilized to represent tensors in more concise forms. Unlike the matrix rank, the notion of rank for tensors is not unique and several tensor ranks and corresponding decompositions have been introduced during the past decades. These decompositions include the Canonical Polyadic Decomposition (CPD) \cite{hitchcock1927expression,hitchcock1928multiple}, Tucker decomposition \cite{tucker1964extension}, and Tensor Train decomposition \cite{oseledets2011tensor}, among others. The CPD is particularly significant and widely used in various machine learning and data science tasks. It decomposes a higher-order tensor into a sum of rank-1 tensors \cite{hitchcock1927expression,hitchcock1928multiple} with the minimum number of terms. If the number of rank-1 terms is not minimum, it is referred to as Polyadic Decomposition (PD). The CPD has found applications in compression \cite{phan2020stable,li2010tensor}, completion \cite{asante2021matrix,yokota2016smooth}, Hammerstein identification \cite{elden2019solving}, and clustering \cite{alexandrov2019nonnegative,wei2019multiscale}. While the best rank CPD approximation of a tensor may not exist in general \cite{de2008tensor}, the non-negative CPD problem is always well-posed and has a global solution \cite{lim2009nonnegative}. Hierarchical Alternating Least Squares (HALS) \cite{cichocki2009fast,cichocki2007hierarchical,cichocki2009nonnegative} and ALS are well-known iterative algorithms for decomposing a tensor in the CPD format. HALS has been extensively used to compute a nonnegative CPD, and an extension of HALS was proposed in \cite{phan2011extended} for the computation of nonnegative Tucker decomposition \cite{tucker1964extension,tucker1966some}. Additionally, a generalized form of the Multiplicative Updating Rules (MUR) \cite{lee2000algorithms} has been extended from matrices to tensors for the nonnegative CPD model. For more details, refer to \cite{cichocki2009nonnegative} and the references therein.

Collaborate neurodynamic optimization has shown to be a powerful approach for solving convex and nonconvex optimization problems. Such applications include computing the LU and the Cholesky decompositions \cite{wang1993recurrentLU}, sparse signal recovery \cite{che2022sparse}, nonnegative matrix factorization \cite{che2018nonnegative,fan2016collective}, Boolean matrix factorization \cite{li2022boolean}, solving linear matrix equation \cite{wang1993recurrent,zhang2002recurrent}, computing the Moore-Penrose pseudoinverse of matrices \cite{wang1997recurrent} etc. A collective neurodynamic model based on the HALS algorithm is proposed in \cite{fan2017collective}. However, the paper is short and the efficiency of the algorithm is not extensively discussed. Here, we expand that paper by providing extensive experiments and formulate the collaborative neurodynamic model based on the CP-ALS algorithm. To the best of our knowledge, this is the first expanded paper that  discusses the possibility of using  collaborative neurodynamic optimization for tensor decomposition.     

Our key contributions are as follows:

\begin{itemize}
    \item Proposing a collaborative neurodynamic model for computation of the CPD and its constrained form.
    
    \item Proposing a discrete-time projection neural network for the nonnegative CPD.

    \item {Developing accelerated versions of the continuous and discrete neurodynamic models by the Hessian preconditioning strategy.}  
    
    \item The extensive experimental results and studying the proposed approach numerically. Our algorithm provides better results for data tensors with high collinearity.
\end{itemize}


The organization of the rest of the paper is as follows. In Section \ref{Pre:Sec}, we present the necessary notations and definitions we need in the rest of the paper. The problem of nonnegative CPD problem is formulated as a collaborative neurodynamic optimization problem in section \ref{Sec:Pro} and its convergence analysis is studied in Section \ref{Con:CNO}. A discrete version of the developed continues CPD is also developed in Section \ref{Sec:DTPNN} and its stability is shown in Section \ref{Sec:stab}. The experimental results are provided in section \ref{Sec:Sim} and finally, a conclusion is given in Section \ref{Sec:Con}. 

\section{Preliminaries}\label{Pre:Sec}
Let us introduce the basic concepts, notations and definitions which we need in the paper. Tensor, matrices and vectors are represented by underlined bold capital letters, bold capital letters and bold lower letters. The Hadamard product, Khatri–Rao product, Kronecker product and outer product are denoted by ``$\ast$'', ``$\odot$'', ``$\otimes$'' and ``$\circ$'', respectively. 

{For a function $f$ defined on domain $D$, a point $x^*\in D$ is called a partial optimum if there exists a neighborhood $N$ of $x^*$, such that $f(x^*)\leq f(x)$, for all $x\in D\cap N$.}

The notation $\times_n$ represents mode-$n$ product of an $N$-th order tensor and a matrix. It is a generalization of matrix-matrix multiplication, defined for $\underline{\mathbf X}\in\mathbb{R}^{I_1\times I_2\times \cdots\times I_N}$ and ${\mathbf B}\in\mathbb{R}^{J\times I_n}$, as
\[
{\left( {{\underline{\mathbf X}}{ \times _n}{\mathbf B}} \right)_{{i_1},\ldots {,i_{n - 1}},j,{i_{n + 1}}, \ldots, {i_N}}} = \sum\limits_{{i_n=1}}^{{I_N}} {{x_{{i_1},{i_2}, \ldots ,{i_N}}}{b_{j,{i_n}}}},
\]
for $j=1,2,\ldots,J$. Here, we have $
\underline{\mathbf X}\times_{n}{\mathbf B}\in\mathbb{R}^{I_1\times \cdots \times I_{n-1}\times J\times I_{n+1}\times \cdots \times I_N}.$
The CPD represents a tensor as a sum of rank one tensors. The CPD of an $N$th-order tensor, $ \underline{\bf X} \in \mathbb{R}^{I_1\times I_2\times\cdots\times I_N}$ is represented as follows \begin{align*}
     \underline{\bf X} &\cong \sum_{r = 1}^R {\bf a}^{(1)}_r \circ {\bf a}^{(2)}_r \circ \cdots \circ {\bf a}^{(3)}_r = \underline{\bf I} \times_1 {\bf A}^{(1)} \times_2 {\bf A}^{(2)} \cdots\times_N {\bf A}^{(N)}\\
      &= \llbracket \underline{\bf I}; {\bf A}^{(1)}, {\bf A}^{(2)},\ldots,{\bf A}^{(N)}\rrbracket,
\end{align*}
where $\underline{\bf I}$ is a super-diagonal core tensor whose all diagonal elements are equal to 1, and ${\bf A}^{(n)} = [{\bf a}_1^{(n)}, {\bf a}_2^{(n)},...,{\bf a}_R^{(n)}] \in \mathbb{R}^{I_n \times R},\, n = 1,2,\ldots,N,$ are factor matrices. For simplicity of presentation, we can consider the so-called Kruskal format $\llbracket {\bf A}^{(1)}, {\bf A}^{(2)},\ldots,{\bf A}^{(N)}\rrbracket$.

The nonnegative CPD of a tensor $\X$ is mathematically  formulated as follow
\begin{eqnarray}\label{NonOPT}
\min_{{\bf A}^{(1)}\geq 0,\ldots\,,{\bf A}^{(N)}\geq 0}\|\underline{\bf X}-\llbracket {\bf A}^{(1)}, {\bf A}^{(2)},\ldots,{\bf A}^{(N)}\rrbracket\|^2_F.
\end{eqnarray}
The problem \eqref{NonOPT} is a nonconvex optimization problem with respect to all factor matrices, which means that finding the global minimum is generally hard. However, the problem is convex with respect to a single factor matrix, which means that we can use convex optimization techniques to update one factor matrix at a time, while fixing the others.

The problem \eqref{NonOPT} minimises the Frobenius norm  as the measure of approximation error. However, other norms or divergences can also be used, depending on the nature and distribution of the data such as Kullback–Leibler (K–L) divergence \cite{lee2000algorithms} and Alpha-Beta divergences \cite{chernoff1952measure,eguchi2001robustifying,mihoko2002robust}. Besides, it is also possible to regularize the objective function in \eqref{NonOPT} using for example graph regularization \cite{qiu2019graph} which is used in some applications. We can impose unit simplex constraints on the factor matrices in the problem \eqref{NonOPT}. This constraint can be useful for some applications, such as topic modeling, clustering, or classification, where the factor matrices can be interpreted as probability distributions or membership indicators. The unit simplex constraint can also improve the stability and efficiency of the matrix factorization, as it reduces the ambiguity and redundancy of the solution. 

\begin{figure}
    \begin{center}
    \includegraphics[width=0.6\linewidth]{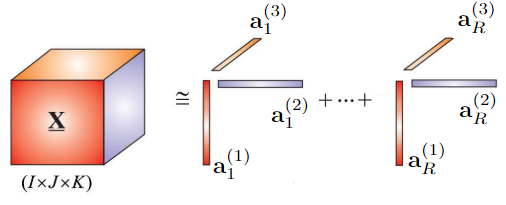}
    \caption{Polyadic decomposition of tensor $\X$ of Rank-$R$ \cite{cichocki2009nonnegative}}\label{fig:CP}.
    \end{center}
\end{figure}

To introduce the collaborative neurodynamic models, we consider the following constrained optimization problem
\begin{eqnarray}\label{OP_1}
\min_{\xx\in\Omega} f(\xx)
\end{eqnarray}
which seeks a vector ${\xx}\in\mathbb{R}^{N}$ over a feasible region $\Omega$. The so-called one layer Recurrent Neural Network (RNN) associated to  optimization problem \eqref{OP_1} is defined as follows
\begin{eqnarray}\label{RNN_1}
\epsilon \frac{d{\xx}(t)}{dt}=-{\xx}(t)+P_{\Omega}({\xx}(t)-\nabla f({\xx}(t))),
\end{eqnarray}
where $\epsilon$ is a small positive quantity (time constant), ${\xx}$ is the state vector, $P_{\Omega}$ is the activation function and $\nabla f$ is the gradient of the objective function $f$. The activation function is defined as a projection of $y$ onto the feasible set $\Omega$
\begin{eqnarray}
P_{\Omega}({\xx})=\arg\min_{\yy\in\Omega}\|\xx-\yy\|_2.
\end{eqnarray}
For example, the activation function for the box constraints is given by
\begin{eqnarray}\label{acfunc}
P_{\Omega}(x)=\left\{\begin{aligned}
u,&&x>u,\\
x,&& x\in [l,u],\\
l,\,&& x<l,
\end{aligned}\right.
\end{eqnarray}
For the special case of $u_i=+\infty,\,l_i=0$, we have the 
so-called rectified linear unit defined as follows 
\[
P_{\Omega}(x)= [x]_{+} = \left\{\begin{aligned}
0,&&x<0,\\
x,&&{x}\geq 0.
\end{aligned}\right.
\]
The vector activation function   $P_{\Omega}:\mathbb{R}^n\rightarrow\Omega$ 
is defined similarly 
where the projection operator is applied to all elements (element-wise projection). It is known  \cite{xia2000stability} that if the objective function $f$ in \eqref{OP_1} is convex, then the solution to the RNN \eqref{RNN_1} over $[0,t]$ for a time-step $t$ is globally stable and convergent to the optimal solution of problem \eqref{OP_1}. This result was generalized to pseudoconvex optimization and nonconvex optimization problems in \cite{hu2006solving} and \cite{li2015one}, respectively. The collaborative neurodynamic optimization uses a set of RNNs that communicate with each other and share their information for searching a global minimum. 
\begin{figure}
\begin{center}
\includegraphics[width=.6\linewidth]{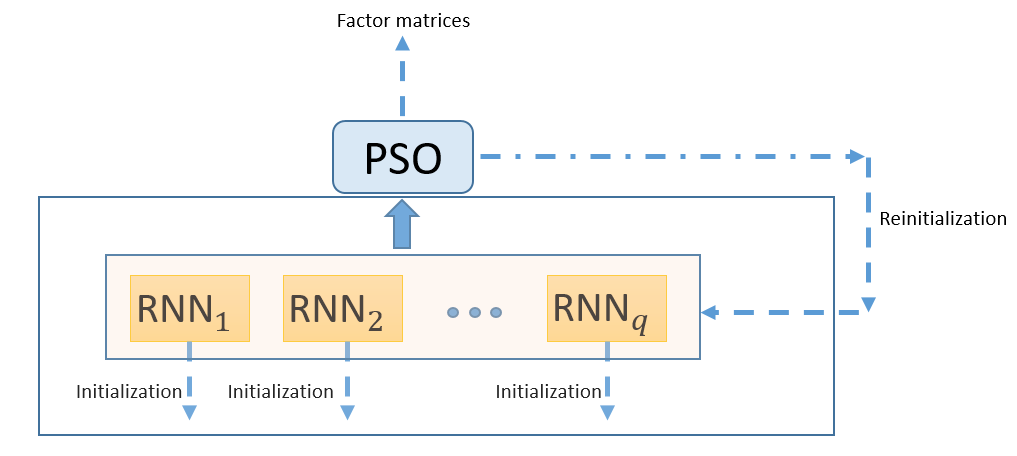}\\
\caption{\small{The procedure of collaborative neurodynamic optimization for nonnegative CPD.}}\label{Pic2}
\end{center}
\end{figure}

Particle swarm optimization (PSO) is an effective population-based optimization method inspired by the collective behavior of swarms of insects or fishes. Its purpose is to find the global optimum of a constrained optimization problem \cite{kennedy1995particle}. In PSO, a group of $N$ particles explores the feasible region and communicates with each other to exchange information about the best locations. The update process for each particle can be described as follows:
\begin{eqnarray}\label{PSO}
\vv_n^{(k+1)}&=&\alpha\vv_n^{(k)}+\beta_1\gamma_1({\bf p}^{(k)}_n-\xx_n^{(k)})+\beta_2\gamma_2({\bf p}^{(k)}_{best}-\xx_n^{(k)}),\\\label{PSO2}
\xx_n^{(k+1)}&=&\xx_n^{(k)}+\vv_n^{(k+1)}\hspace*{5cm},
\end{eqnarray}
where $\alpha\in[0,1]$ represents the inertia weight that controls the momentum of the particles. The variables $\gamma_1$ and $\gamma_2$ are two uniform random variables in the range $[0,1]$, introducing stochasticity to the search process. The acceleration constants $\beta_1$ and $\beta_2$ determine the influence of the individual and global bests on the particles.  In this context, $p^{(k)}_n$ refers to the best location of the $n$-th particle, while $p_{\text{best}}^{(k)}$ represents the best location among all particles at iteration $k$. The particles are moved along their respective velocities $\vv_n^{(k)}$. The best position of each particle and the best global solution of all particles are updated at each iteration as follows
\begin{align}\label{localsearch}
{\bf p}^{(k+1)}_{n}&= \begin{cases}
   {\bf p}_n^{(k)},\quad & f(\xx_n^{(k+1)})\geq f({\bf p}_n^{(k+1)})\\
\xx_n^{(k+1)},\quad &  f(\xx_n^{(k+1)})< f({\bf p}_n^{(k+1)})\hspace*{-.2cm}
\end{cases}\\
\label{globalsearch}
{\bf p}_{best}^{(k+1)}&=\arg\min_{{\bf p}_i^{(k+1)}} f(p_i^{(k+1)}),\,\quad 1\leq i\leq q.
\end{align}
The RNNs are reinitialized according to the PSO formula \eqref{PSO}-\eqref{PSO2} and using the updating rules \eqref{localsearch} and \eqref{globalsearch}. 

For the tensor decomposition studied in this paper, we define the objective function $f$ as squares of the Frobenius norm of the approximation error 
\begin{eqnarray}\label{objec_func}
F({\bf A}^{(1)},{\bf A}^{(2)},\ldots,{\bf A}^{(N)})=\|\underline{\bf X}-\llbracket {\bf A}^{(1)}, {\bf A}^{(2)},\ldots,{\bf A}^{(N)}\rrbracket\|^2_F.
\end{eqnarray}
The next quantity prevents the particles from converging to a local minimum before reaching a global solution. It measures the distance between the best position of each particle and the best global position of all particles in the swarm.
\begin{definition}
The diversity of particles in the swarm is measured via the following formula
\[
DI(k)=\frac{1}{q}\sum_{n=1}^{q}\|{{\bf p}_n^{(k)}-{\bf p}_{best}^{(k)}}\|_2,
\]
where $q$ is the number of RNNs in the swarm. 
\end{definition}
A small diversity means that the particles are concentrated on a small region of the feasible region and there is a risk of convergence of all particles to a local minimum. In \cite{yan2016collective}, it was proven that for sufficient large diversity, the collaborative neurodynamic models almost surely converge to a global solution. To enhance the search procedure and allow the particles to look for a global minimum, the wavelet mutation is an effective approach used extensively in the literature \cite{che2022sparse,fan2016collective}.
\begin{align}\label{Wavelet}
\xx^{(k+1)}_{n}= \begin{cases}
\xx_n^{(k)}+\kappa(u_n-\xx_n^{(k)}),\, &\kappa>1\\
\xx_n^{(k)}+\kappa(\xx_n^{(k)}-l_n),\, & \kappa<1 \end{cases}
\end{align}
where $u_n$ and $l_n$ are the upper and lower bounds of the state $\xx_n$, respectively and $\kappa(\phi)$ is the Gabor wavelet function defined as
\begin{equation}\label{Wavelet2}
\kappa(\phi)=\frac{1}{\sqrt{a}}\exp(-\frac{\phi}{2a})\cos(\frac{5\phi}{a}),
\end{equation}
where $a=\exp(10(k/k_{max}))$, $k$ is the current iteration number, $k_{max}$ is the maximum number of iterations, $\phi\in [-2.5a,2.5a]$. A small $|\kappa|$ leads to a small search space while a larger $\kappa$ provides a larger searching space. For $\kappa \approx 1$, $\xx_n^{(k+1)}$ approaches its upper bound $u_n$, while for $\kappa \approx -1$, it approaches its lower bound $l_n$.
\section{CNO-based algorithm for nonnegative CPD (CNO-CPD)}\label{Sec:Pro}
For the simplicity of presentation, we describe the proposed collaborative neurodynamic approach for third-order tensors while the generalization to higher-order tensors is straightforward. 
To compute a rank-$R$ CPD of a tensor $\underline{\bf X}\in\mathbb{R}^{I_1\times I_2\times I_3}$, we consider the objective function $F$ defined as follows:
\begin{eqnarray}\label{F_defin}
\nonumber
F({\bf A},{\bf B},{\bf C})&=&\frac{1}{2}\|\underline{\bf X}-[[{\bf A},{\bf B},{\bf C}]]\|_F^2=
\frac{1}{2}\|\underline{\bf X}\|_F^2+\frac{1}{2}\|[[{\bf A},{\bf B},{\bf C}]]\|_F^2-\langle\underline{\bf X},[[{\bf A},{\bf B},{\bf C}]]\rangle\\&=&\label{object}
\frac{1}{2}\|\underline{\bf X}\|_F^2+\frac{1}{2}\|\A(\C\odot\B)^T\|_F^2-\langle \X_{(1)},\A(\C\odot\B)^T\rangle,
\end{eqnarray}
where ${\bf A}\in\mathbb{R}^{I_1\times R},\,{\bf B}\in\mathbb{R}^{I_2\times R}$ and ${\bf C}\in\mathbb{R}^{I_3\times R}$. The gradients of $F$ with respect to three factor matrices are given by
\begin{align}
\nabla_{\A} F&=\A(\C\odot\B)^T(\C\odot\B)-\X_{(1)}(\C\odot\B)      \notag 
\\
&= \A\left((\C^{T}\C) \ast (\B^T\B)\right)-\X_{(1)}(\C\odot\B) \label{EQ:1}\\
\nabla_{\B} F&=\B\left((\C^{T}\C)  \ast (\A^T\A)\right)-\X_{(2)}(\C\odot\A) \label{EQ:2}\\
\nabla_{\C} F&=\C\left((\B^{T}\B) \ast(\A^T\A)\right)-\X_{(3)}(\B\odot\A) \label{EQ:3} 
\end{align}
We write product of two Khatri-Rao products as Hadamard product to reduce the computational complexity $(\C\odot\B)^T(\C\odot\B)=(\C^{T}\C) \ast (\B^T\B)$ in \eqref{EQ:1}-\eqref{EQ:3} \cite{kolda2009tensor}. 
Now, the neurodynamic formulation of the CPD is written as follows
\begin{align}\label{CON_1}
\epsilon_1\frac{d{\bf a}}{dt}&=-{\bf a} + [{\bf a}-{H_{\bf a}}^{-1} \nabla_{{\bf a}}F({\bf a},{\bf b},{\bf c})]_{+},\\
\epsilon_2\frac{d{\bf b}}{dt}&=-{\bf b} + [{\bf b}-{H_{\bf b}}^{-1}  \nabla_{{\bf b}}F({\bf a},{\bf b},{\bf c})]_{+},\\\label{CON_33}
\epsilon_3\frac{d{\bf c}}{dt}&=-{\bf c} + [{\bf c}-{H_{\bf c}}^{-1}  \nabla_{{\bf c}}F({\bf a},{\bf b},{\bf c})]_{+}
\end{align}
where ${\bf a}={\rm vec}(\A),\,{\bf b}={\rm vec}(\B)$, ${\bf c}={\rm vec}(\C)$, and $H_{\bf a}$, $H_{\bf b}$ and $H_{\bf c}$ are the Hessian matrices of $F()$ w.r.t. ${\bf a}$, ${\bf b}$ and ${\bf c}$.

For CPD, the Hessian can be  computed as
$H_{\bf a} = {\bf P}_{\A} \otimes \I_{I_1}$, ${\bf P}_{\A} = (\C^T \C) \ast (\B^T \B)$, {where $\I_{I_1}$ is an identity matrix of size $I_1\times I_1$}. 
The above neurodynamic system can be rewritten in the matrix form for factor matrices as 
\begin{align}\label{EQ:4}
\epsilon_1\frac{d{\A}}{dt}&=-\A+[\A-\nabla_{\A}F(\A,\B,\C){{\bf P}_{\A}^{-1}}]_{+},\\\label{EQ:5}
\epsilon_2\frac{d{\B}}{dt}&=-\B+[\B-\nabla_{\B}F(\A,\B,\C){{\bf P}_{\B}^{-1}}]_{+},\\\label{EQ:6}
\epsilon_3\frac{d{\C}}{dt}&=-\C+[\C-\nabla_{\C}F(\A,\B,\C){{\bf P}_{\C}^{-1}}]_{+}.
\end{align}
We can interpret ${\bf P}_{\Z}$ as symmetric preconditioner for the gradient of $F$ w.r.t. $\Z$ built at the current point. It is well known in optimization that preconditioning often leads to faster convergence. The proposed algorithm demonstrates similar observation. The used preconditioners require at least quadratic memory space $\mathcal{O}(R^2)$ and $\mathcal{O}(R^3)$ computational effort to invert it before being applied to the gradient. 
  
The three-timescale neurodynamic optimization \eqref{CON_1}-\eqref{CON_33} is a one-layer RNN which looks for a local minimum and finds all factor matrices all at once. However, it is possible to consider several RNNs using different initial points and $\epsilon_i,\,i=1,2,3$ to reach various local minima. Then, using the PSO method, the local minima can communicate with each other to approach a better local minimum. The more RNNs we use, the higher our chance of achieving a global minimum. Note that each neurodynamic model is independent of the others. So they can solve in a distributed and parallel manner, which allows for real-time optimization. The {CNO-CPD} is based on the CP-ALS algorithm, but the CNO algorithm can be formulated based on the HALS algorithm to compute a nonnegative tensor decomposition. More precisely, we can reformulate the minimization in \eqref{NonOPT} as a sequence of nonnegative rank-1 tensor approximations. To this end, consider the following minimization problem
\begin{eqnarray}\label{HALS}
\min_{{\bf a}^{(i)}\geq 0,\,{\bf b}^{(i)}\geq 0,\,{\bf c}^{(i)}\geq 0}\frac{1}{2}\|\underline{\X}^{(i-1)}-[[{\bf a}^{(i)},{\bf b}^{(i)},{\bf c}^{(i)}]]\|_F^2,
\end{eqnarray}
where
\begin{eqnarray}
\nonumber
\A^{(-i)}&=&[{\bf a}^{(1)},\ldots,{\bf a}^{(i-1)},{\bf a}^{(i+1)},\ldots,{\bf a}^{(R)}]\in\mathbb{R}^{I_1\times(R-1)},\\
\nonumber
\B^{(-i)}&=&[{\bf b}^{(1)},\ldots,{\bf b}^{(i-1)},{\bf b}^{(i+1)},\ldots,{\bf b}^{(R)}]\in\mathbb{R}^{I_2\times(R-1)},\\
\nonumber
\C^{(-i)}&=&[{\bf c}^{(1)},\ldots,{\bf c}^{(i-1)}, {\bf c}^{(i+1)},\ldots,{\bf c}^{(R)}]\in\mathbb{R}^{I_3\times(R-1)},\\
\nonumber
\underline{\X}^{(i-1)}&=&\underline{\X}-[[{\bf A}^{(-i)},{\bf B}^{(-i)},{\bf C}^{(-i)}]]\in\mathbb{R}^{I_1\times I_2\times I_3}.
\end{eqnarray}
 Here, the matrices $\A^{(-i)},\,\B^{(-i)}$ and $\C^{(-i)}$ are initialized and we update the vectors ${\bf a}^{(i)},\,{\bf b}^{(i)}$ and ${\bf c}^{(i)}$ by solving minimization problem \eqref{HALS} which is a special case of problem \eqref{NonOPT} (rank one tensor), but is easier to be handled. 

 The continuous CNO-based algorithm for computation of the CPD (CNO-CPD) is presented in Algorithm \ref{ALG:TSVDP} and Figure \ref{flo}.

 \begin{wrapfigure}{r}{0.4\textwidth}
 \begin{center}
\includegraphics[width=.4\columnwidth]{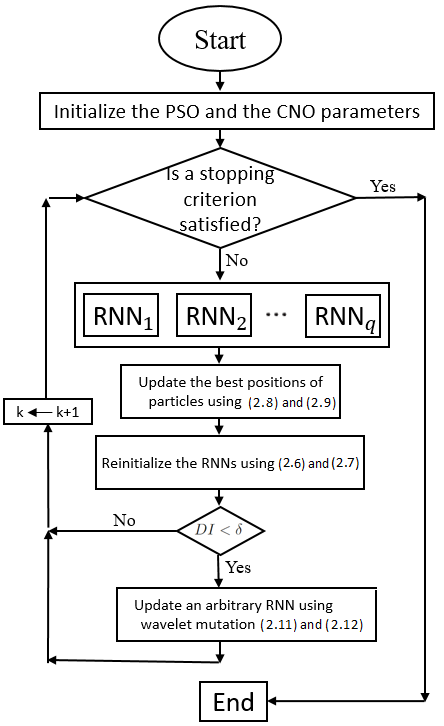}
\caption{\small{Flowchart of the proposed {CNO-CPD} method.}}\label{flo}
 \end{center}
\end{wrapfigure}
\subsection{Log barrier formulation}
The projection onto nonnegative orthant in the neurodynamic system \eqref{CON_1}-\eqref{CON_33} can be removed if we convert the constrained minimization problem \eqref{NonOPT} to an unconstrained one, e.g. using the log barrier method \cite{wright2006numerical}. Utilizing the benefits of the log barrier approach, such as its flexibility, efficiency, assured convergence, high accuracy, and ease of implementation, are the main motivations behind this conversion. To be more precise, we consider the minimization problem
\begin{eqnarray}\label{op_obje}
\min_{{\bf A},{\bf B},{\bf C}}  \tilde{F}({\bf A},{\bf B},{\bf C}).   
\end{eqnarray}
where $\tilde{F}({\bf A},{\bf B},{\bf C})=F({\bf A},{\bf B},{\bf C})+\gamma\,G({\bf A},{\bf B},{\bf C})$, $F$ was defined in \eqref{F_defin}, $\gamma$ is a regularization parameter and $G= \sum_{i,r} \log({\bf A}(i,r))+ \sum_{j,r} \log({\bf B}(j,r))+ \sum_{k,r} \log({\bf C}(k,r))$. Then, by some straightforward computations, the neurodynamic system \eqref{CON_1}-\eqref{CON_33} is converted to 
\begin{eqnarray}\label{neuro_logbar}
\epsilon_1\frac{d{\bf a}}{dt}=-{\tilde{H}_{\bf a}}^{-1}\nabla_{{\bf a}}\tilde{F}({\bf a},{\bf b},{\bf c}),\\\label{neuro_logbar2}
\epsilon_2\frac{d{\bf b}}{dt}=-{\tilde{H}_{\bf b}}^{-1}\nabla_{{\bf b}}\tilde{F}({\bf a},{\bf b},{\bf c}),\\\label{neuro_logbar3}
\epsilon_3\frac{d{\bf c}}{dt}= -{\tilde{H}_{\bf c}}^{-1}\nabla_{{\bf c}}\tilde{F}({\bf a},{\bf b},{\bf c}),
\end{eqnarray}
where $\nabla_{{\bf a}}\tilde{F}=\nabla_{{\bf a}}{F}+\gamma\,\nabla_{{\bf a}}{G}$ is the gradient of the objective function with respect to the first factor matrix, $\nabla_{{\bf a}}{G} = 1\oslash {\bf a}$, where ``$\oslash$'' means the element-wise division operation.  Similar expressions can be derived for $\nabla_{{\bf b}}\tilde{F}$ and $\nabla_{{\bf c}}\tilde{F}$. ${\tilde{H}_{\bf a}}= \nabla_{\bf a}^2{F}+\gamma\,\nabla_{\bf a}^2{G}$ is the Hessian of the objective function $\tilde{F}$ with respect to the first factor matrix and $\nabla_{\bf a}^2{G} =-{\rm diag}(1\oslash (\bf a \ast \bf a))$. Similar expressions can be established for ${\tilde{H}_{\bf b}}$ and ${\tilde{H}_{\bf c}}$. Indeed, the neurodynamic system \eqref{neuro_logbar}-\eqref{neuro_logbar3} is the preconditioned gradient flow corresponding to minimization problem \eqref{op_obje}. In the simulation section, we also report the performance of this technique for computing the nonnegative CPD of tensors.

\begin{algorithm}
\caption{CNO-CPD}\label{ALG:TSVDP}
\begin{algorithmic}[1]
{
\STATE{{\bf Input:} A data tensor $\underline{\bf X}$ and a target tensor rank $R$}
\STATE{{\bf Output:} The nonnegative factor matrices ${\bf A},\,{\bf B},\,{\bf C}$ of the CPD}
\STATE{Initializing the state of each RNN randomly ${\bf x}_n^{(0)},\,n=1,2,\ldots,q,$}
\STATE{Set the best local position of particles $p_n^{(0)}=\xx_n^{(0)},\,n=1,2,\ldots,q,$}
\STATE{ Set the best global position of all particles as $p^{(0)}_{best}=\arg\min\{f(\xx_n^{(0)})\},\,n=1,2,\ldots,q$ }
\STATE{Set the CNO and PSO parameters, the diversity threshold $\delta$, error bound $\epsilon$ and the maximum number of iterations $k_{max}$}
\WHILE{$\|f(\xx_n^{(k+1)})-f(\xx_n^{(k)})\|\geq\epsilon$ or $k<k_{max}$}
\STATE{Solve RNNs ($n=1,2,\ldots,q$) formulated as dynamic systems \eqref{CON_1}-\eqref{CON_33} and find their corresponding equilibrium points;}
\STATE{Update the best position of each particle and the best global position of all particles using \eqref{localsearch} and \eqref{globalsearch}}
\STATE{Update the initial conditions of RNNs using Formula \eqref{PSO}-\eqref{PSO2};}
 \IF{${DI<\delta}$}
 \STATE{Perform the wavelet mutation \eqref{Wavelet} and \eqref{Wavelet2}}
 \ENDIF
\STATE{$k=k+1$;}
\ENDWHILE
}
\end{algorithmic}
\end{algorithm}

The convergence behavior of the CNO-CPD is discussed in the next section.

\section{Convergence analysis of the CNO-CPD}\label{Con:CNO}
The constrained optimization problem \eqref{NonOPT} is nonconvex, so the classical iterative algorithms do not guarantee the convergence to a global minimum. Here, we discuss the potential of Algorithm \ref{ALG:TSVDP} (CNO-CPD) for achieving this goal. The Karush–Kuhn–Tucker (KKT) condition as the first optimal conditions necessarily should be satisfied. It has been proved in \cite{yan2014collective} that the equilibrium points of a one-layer RNN and KKT points of a global constrained optimization problem are equivalent as stated in the following lemma.

\begin{lemma} \cite{yan2014collective}
The equilibrium points of a one-layer RNN and KKT points of a global constrained optimization problem have one-to-one correspondence.
\end{lemma}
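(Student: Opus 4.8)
The plan is to reduce the equilibrium condition of the RNN \eqref{RNN_1} to a variational inequality and then identify that variational inequality with the KKT system of \eqref{OP_1}. First I would observe that a point $\xx^*$ is an equilibrium of \eqref{RNN_1} precisely when the right-hand side vanishes, i.e. when
\[
\xx^* = P_{\Omega}\bigl(\xx^* - \nabla f(\xx^*)\bigr).
\]
Thus the equilibria are exactly the fixed points of the projected-gradient map, and the whole argument amounts to decoding this fixed-point equation.

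Next I would invoke the classical variational characterization of the Euclidean projection onto a closed convex set: for any $\ww$, one has $\mathbf{z} = P_{\Omega}(\ww)$ if and only if $(\yy - \mathbf{z})^T(\ww - \mathbf{z}) \le 0$ for all $\yy \in \Omega$. Applying this with $\ww = \xx^* - \nabla f(\xx^*)$ and $\mathbf{z} = \xx^*$ collapses the projection to the variational inequality
\[
(\yy - \xx^*)^T \nabla f(\xx^*) \ge 0 \qquad \text{for all } \yy \in \Omega,
\]
so that equilibria of the RNN coincide exactly with the solutions of this variational inequality over the feasible region.

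The remaining step is to match the variational inequality with the KKT system. Writing $\Omega = \{\xx : g_i(\xx) \le 0,\ h_j(\xx) = 0\}$, I would argue that, under a standard constraint qualification, the variational inequality above holds if and only if there exist multipliers $\boldsymbol{\lambda} \ge 0$ and $\boldsymbol{\mu}$ with $\nabla f(\xx^*) + \sum_i \lambda_i \nabla g_i(\xx^*) + \sum_j \mu_j \nabla h_j(\xx^*) = 0$, together with primal feasibility and complementary slackness $\lambda_i g_i(\xx^*) = 0$. Since under the constraint qualification these multipliers are uniquely determined by the active set at $\xx^*$, the map sending each equilibrium $\xx^*$ to its KKT triple $(\xx^*, \boldsymbol{\lambda}, \boldsymbol{\mu})$, and its inverse projecting a KKT point onto its primal part, are mutually inverse, which gives the claimed one-to-one correspondence.

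I expect the main obstacle to be this last equivalence between the variational inequality and the KKT conditions, since it is exactly here that convexity of $\Omega$ and a constraint qualification are indispensable: without them the projection need not be single valued and the tangent-cone description underlying multiplier existence can fail. For the nonnegativity constraints $\Omega = \{\xx \ge 0\}$ relevant to \eqref{NonOPT}, however, the equivalence is transparent, as the variational inequality reduces componentwise to $\nabla f(\xx^*)_k \ge 0$, $\xx^*_k \ge 0$ and $\xx^*_k\,\nabla f(\xx^*)_k = 0$, which is precisely the KKT system with $\boldsymbol{\lambda} = \nabla f(\xx^*)$; hence the correspondence is immediate in our setting.
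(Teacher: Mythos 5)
The paper offers no proof of this lemma at all: it is stated as a direct quotation from \cite{yan2014collective}, so there is no internal argument to compare yours against, and what you have supplied is essentially the standard proof from that literature. Your chain is correct: an equilibrium of \eqref{RNN_1} is exactly a fixed point of the projected-gradient map $\xx\mapsto P_{\Omega}(\xx-\nabla f(\xx))$; the obtuse-angle characterization of the Euclidean projection (this is precisely Lemma \ref{Lem1} of the paper, quoted from \cite{kinderlehrer2000introduction}) turns the fixed-point equation into the variational inequality $(\yy-\xx^*)^T\nabla f(\xx^*)\ge 0$ for all $\yy\in\Omega$; and this variational inequality is equivalent to the KKT system under a constraint qualification. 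Your closing observation is also the right one for this paper's setting: with $\Omega$ the nonnegative orthant the equivalence is elementary, reducing to componentwise complementarity $\xx^*\ge 0$, $\nabla f(\xx^*)\ge 0$, $\xx^*\ast\nabla f(\xx^*)=0$ with multiplier vector $\boldsymbol{\lambda}=\nabla f(\xx^*)$, and since the constraints are affine no constraint qualification even needs to be invoked. The one overstatement is your claim that the multipliers are ``uniquely determined'' under ``a standard constraint qualification'': Slater or MFCQ give existence of multipliers, not uniqueness; uniqueness requires LICQ. This matters only if you read the one-to-one correspondence as a bijection between equilibria and KKT \emph{triples} $(\xx^*,\boldsymbol{\lambda},\boldsymbol{\mu})$; if, as the paper uses the lemma, a KKT point is a primal point admitting some multipliers, the correspondence is simply the identity on the primal variable and the subtlety evaporates. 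In the nonnegativity case the issue is moot anyway, since the active constraint gradients are distinct coordinate vectors and LICQ holds automatically.
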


To prove the global convergence of the CNO-CPD, we can use the existing theorems from the stochastic optimization framework as the PSO algorithm belongs to this category \cite{uryasev2013stochastic} and basically exploit it in our formulation. To do so, we assume that $\Omega$ is the feasible set of the objective function of the constrained optimization problem \eqref{OP_1} and make the following key assumptions in our analysis:

\begin{itemize}
    \item {\bf Assumption I}. If $f({\bf x}^{(k+1)\star})\leq f({\bf x}^{(k) \star})$, then $f({\bf x}^{(k+1) \star})\leq f(\widehat{\bf x})$, where ${\bf x}^{(k) \star}$ denotes the global best solution computed at iteration $k$ and $\widehat{\bf x}$ is an intermediate solution during the iteration.
    
    \item {\bf Assumption II}. Given an arbitrary Borel subset $B$ of $\Omega$ ($ B\subset \Omega$), with a discrete Lebesgue
measure $\mu(B)>0$, assume 
    \begin{equation}
        \prod_{k=1}^{\infty}(1-\mu_k(B))=0, 
    \end{equation}
where $\mu_k(B)$ is the conditional probability of achieving a subset $B$ with the measure $\mu_k$, which is defined as follows 
\[\mu_k(B)=P({\bf x}^{(k)}\in B|{\bf x}^{(0)},{\bf x}^{(1)},\ldots,{\bf x}^{(k-1)}).\]
\end{itemize}
Clearly, Assumption I guarantees the monotonically nonincreasing property of the solutions obtained by the CNO-CPD while Assumption II means that with a probability of zero the CNO-CPD fails to find a point of $B$ after infinite time. The next lemma shows that under assumptions I and II, any stochastic process can obtain the global optimum almost surely.
\begin{lemma}\label{Lem_1}\cite{solis1981minimization}
Let $f$ and $\Omega\in\mathbb{R}^n$ be a measurable function and a measurable set, respectively. Assume that Assumptions I and II are satisfied and $\Omega^{g}$ is the set of global points of $f$ over $\Omega$, then for the sequence ${\bf x}^{(k)}$ obtained by a stochastic procedure, we have 
\[
\lim_{k \to +\infty} P({\bf x}^{(k)}\in \Omega^{g})=1,
\]
which means with probability one, we will achieve the global minimum given any initial point.
\end{lemma}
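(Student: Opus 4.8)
The plan is to recognise this as the classical random-search convergence theorem of Solis and Wets and to reproduce its argument. The key idea is to work not with the (possibly measure-zero) optimum set $\Omega^g$ directly, but with a family of $\epsilon$-optimal sublevel sets that are guaranteed to have positive measure; Assumption~I will make each such set \emph{absorbing} for the iterates, and Assumption~II will then force the iterate into it with probability tending to one.

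First I would fix the target measure-theoretically. Let $\psi=\inf\{t\in\mathbb{R}:\mu(\{{\bf x}\in\Omega:f({\bf x})<t\})>0\}$ be the essential infimum of $f$ on $\Omega$, which is finite and well defined because $f$ and $\Omega$ are measurable, and for $\epsilon>0$ set $R_\epsilon=\{{\bf x}\in\Omega:f({\bf x})<\psi+\epsilon\}$. By the definition of $\psi$ one has $\mu(R_\epsilon)>0$ for every $\epsilon>0$, which is exactly the hypothesis needed to invoke Assumption~II with $B=R_\epsilon$, and $\Omega^g=\bigcap_{\epsilon>0}R_\epsilon$ up to a null set. Next I would use Assumption~I, whose role is to guarantee that the best objective value is nonincreasing along the iterations: once the state enters the sublevel set $R_\epsilon$ it can never leave, so $\{{\bf x}^{(k)}\in R_\epsilon\}\subseteq\{{\bf x}^{(k+1)}\in R_\epsilon\}$. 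Equivalently, the failure event $\{{\bf x}^{(k)}\notin R_\epsilon\}$ is contained in the event that none of ${\bf x}^{(0)},\dots,{\bf x}^{(k)}$ has ever fallen inside $R_\epsilon$.

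With absorption in hand I would estimate the probability of persistent failure. Writing $1-\mu_j(R_\epsilon)=P({\bf x}^{(j)}\notin R_\epsilon\mid {\bf x}^{(0)},\dots,{\bf x}^{(j-1)})$ and peeling off one iteration at a time via the tower property of conditional expectation, the absorption property delivers the bound $P({\bf x}^{(k)}\notin R_\epsilon)\le\prod_{j=1}^{k}\bigl(1-\mu_j(R_\epsilon)\bigr)$. Letting $k\to\infty$ and invoking Assumption~II with the positive-measure set $B=R_\epsilon$ shows $\lim_{k\to\infty}P({\bf x}^{(k)}\notin R_\epsilon)=0$, hence $\lim_{k\to\infty}P({\bf x}^{(k)}\in R_\epsilon)=1$ for every $\epsilon>0$. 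Since this holds for all $\epsilon$ and $\Omega^g=\bigcap_{\epsilon>0}R_\epsilon$, the stated conclusion $\lim_{k\to\infty}P({\bf x}^{(k)}\in\Omega^g)=1$ follows.

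I expect the main obstacle to be the probability estimate of the third step rather than the set-up: the conditional measures $\mu_j$ are path-dependent, so the product bound is not a mere independence statement and must be justified by a careful induction on $k$, isolating the absorbing event correctly at each conditioning. A secondary subtlety is the gap between the positive-measure region $R_\epsilon$, to which the probabilistic machinery genuinely applies, and $\Omega^g$ itself, which may be a null set; this is exactly why the argument is routed through the $\epsilon$-sublevel sets, and why the final passage $\epsilon\to0$ is best read as almost-sure convergence of $f({\bf x}^{(k)})$ to the global optimal value $\psi$.
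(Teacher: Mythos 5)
The paper contains no proof of this lemma at all: it is quoted, with the citation \cite{solis1981minimization}, as an external result, and is only \emph{used} (in the proof of Theorem \ref{thm_glob}) by verifying Assumptions I and II. So the only meaningful comparison is with the Solis--Wets argument the paper is implicitly relying on, and your reconstruction is faithful to it: the essential infimum $\psi$, the $\epsilon$-sublevel sets $R_\epsilon$ of positive measure (which is what allows Assumption II to be invoked), the absorption property extracted from Assumption I, and the telescoped conditional bound $P({\bf x}^{(k)}\notin R_\epsilon)\le\prod_{j=1}^{k}\bigl(1-\mu_j(R_\epsilon)\bigr)\to 0$ are exactly the ingredients of the classical proof, and your caution that the product bound needs an induction through the tower property (the $\mu_j$ being path-dependent) is well placed.

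One point you raise deserves to be stated as the main caveat rather than a side remark: what this argument proves is $\lim_{k\to\infty}P({\bf x}^{(k)}\in R_\epsilon)=1$ for each fixed $\epsilon>0$, equivalently (using the monotonicity from Assumption I) almost-sure convergence of $f({\bf x}^{(k)})$ to $\psi$. It does \emph{not} prove $\lim_{k\to\infty}P({\bf x}^{(k)}\in\Omega^g)=1$ for the exact-minimizer set $\Omega^g=\bigcap_{\epsilon>0}R_\epsilon$, because the limits $k\to\infty$ and $\epsilon\to 0$ cannot be interchanged in general: a sequence whose values strictly decrease to $\psi$ without attaining it eventually lies in every $R_\epsilon$ yet never in $\Omega^g$, which may even be $\mu$-null. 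The original Solis--Wets theorem is stated with the $\epsilon$-optimality region precisely for this reason; the lemma as transcribed in the paper is literally stronger than the cited result. This is a defect of the paper's paraphrase, not of your proof, and your proposed reading of the conclusion as value-convergence is the correct repair.
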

Since we use PSO as a stochastic process in the CNO-CPD, similar to \cite{fan2016collective}, we can prove that for the CNO-CPD with wavelet mutation, Assumptions I and II automatically hold. So according to Lemma \ref{Lem_1}, we conclude the global convergence of the CNO-CPD is guaranteed. So we can present the following theorem.

\begin{theorem}\label{thm_glob}
The CNO-CPD {is globally convergent} with probability one. 
\end{theorem}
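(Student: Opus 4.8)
The plan is to reduce the statement to the abstract stochastic global-convergence result recorded in Lemma \ref{Lem_1}, which guarantees that any stochastic minimization procedure satisfying Assumptions I and II reaches a global minimizer with probability one. Concretely, I would identify the sequence $\{{\bf x}^{(k)}\}$ generated by Algorithm \ref{ALG:TSVDP} --- where each ${\bf x}^{(k)}$ stacks the current RNN states / particle positions encoding the factor matrices --- with the sequence appearing in Lemma \ref{Lem_1}, take $f=F$ from \eqref{objec_func} and $\Omega$ the nonnegative orthant, and then verify the two assumptions in turn. Because the nonnegative CPD problem is well-posed and admits a global solution (as noted in the introduction), the set $\Omega^{g}$ of global minimizers is nonempty, so the conclusion of Lemma \ref{Lem_1} is meaningful.

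First I would verify Assumption I. The PSO bookkeeping rules \eqref{localsearch} and \eqref{globalsearch} update the personal best ${\bf p}_n^{(k)}$ and the swarm best ${\bf p}_{best}^{(k)}$ only when a strictly smaller objective value is encountered; hence the recorded sequence $f({\bf x}^{(k)\star})=f({\bf p}_{best}^{(k)})$ is monotonically nonincreasing in $k$. Moreover, each RNN in \eqref{CON_1}--\eqref{CON_33} relaxes toward an equilibrium point, which is a KKT point of \eqref{NonOPT} by the one-to-one correspondence lemma cited above, and the gradient-projection flow does not increase the value of $F$ along the trajectory. Consequently any intermediate iterate $\widehat{\bf x}$ produced between two recorded best solutions satisfies $f({\bf x}^{(k+1)\star})\le f(\widehat{\bf x})$, which is exactly the descent property demanded by Assumption I.

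The harder part is Assumption II. Here I would argue that the wavelet mutation \eqref{Wavelet}--\eqref{Wavelet2}, triggered whenever the diversity $DI(k)$ falls below the threshold $\delta$, keeps the sampling law of ${\bf x}^{(k)}$ spread over the whole feasible box $[l,u]$. The Gabor coefficient $\kappa(\phi)$ in \eqref{Wavelet2} is driven by a random phase $\phi\in[-2.5a,2.5a]$, so the mutated state in \eqref{Wavelet} has a density that is bounded below on any sub-box of $[l,u]$ of positive measure. Hence for an arbitrary Borel set $B\subset\Omega$ with $\mu(B)>0$, the conditional hitting probability $\mu_k(B)$ admits a strictly positive lower bound $\eta>0$ independent of the past history, whence $\prod_{k=1}^{\infty}(1-\mu_k(B))\le\prod_{k=1}^{\infty}(1-\eta)=0$, giving Assumption II. This mirrors the argument in \cite{fan2016collective}, the essential technical content being the uniform lower bound on the mutation density and the verification that it survives the element-wise nonnegativity projection.

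Having established both assumptions, I would invoke Lemma \ref{Lem_1} directly to obtain $\lim_{k\to+\infty}P({\bf x}^{(k)}\in\Omega^{g})=1$, i.e. the CNO-CPD attains a global minimizer with probability one, completing the proof. I expect the genuine obstacle to lie entirely in Assumption II: one must argue carefully that the mutation --- which fires only conditionally, when diversity is low --- nonetheless guarantees a uniform positive probability of reaching every positive-measure subset, and that this lower bound is not annihilated by the subsequent projection onto the nonnegative orthant and the RNN relaxation that follows each reinitialization.
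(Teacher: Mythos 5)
Your proposal follows essentially the same route as the paper: both reduce the theorem to Lemma \ref{Lem_1} by verifying Assumption I through the monotone bookkeeping rule \eqref{localsearch} and Assumption II through the wavelet mutation \eqref{Wavelet}--\eqref{Wavelet2} and the re-initialization of the RNNs over the feasible region. The only difference is one of rigor rather than of approach: where the paper argues only that the supports of the swarm's sampling measures together with $\Omega$ cover the feasible region, you demand a uniform positive lower bound $\eta$ on the conditional hitting probability $\mu_k(B)$, which is what actually forces $\prod_{k=1}^{\infty}(1-\mu_k(B))=0$ --- a sharper execution of the same step, and you correctly flag (as the paper does not) that the conditional triggering of the mutation is where such a bound still needs care.
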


\begin{proof}
To prove this theorem, we need to show that the CNO-CPD algorithm satisfies both Assumptions I and II. The sequence $\{{\bf x}^{(k)}\}_{k=0}^{\infty}$ generated by the CNO-CPD algorithm has the monotonically non-increasing property because of using the updating rule \eqref{localsearch}. So the Assumption I holds. 

Assume that the support of the measure $\mu$ for a swarm with a population size of $q$ at iteration $k$ is denoted by $H(\mu_k)=\cup_{i=1}^{q}H(\mu_k^i)$, where $H(\mu_k^i)$ is the support of the $i$-th RNN. Due to using wavelet mutation and re-initializing the RNNs from the feasible region $\Omega$, the actual support is $H(\mu_k)=\cup_{i=1}^{q}H(\mu_k^i)\cup\Omega$, which means that the search space $S$ covers the feasible region. So Assumption II is also held. From Lemma \ref{Lem_1}, the proof is completed.
\end{proof}

The results presented in this section shows that the CNO-CPD algorithm using sufficient number of networks and iterations produces a sequence of non-increasing iterates which almost surely converges to a global optimum point. 
\section{CPD with Discrete-time projection neural network (DTPNN)}\label{Sec:DTPNN}
The discretized version of the continuous dynamic system \eqref{RNN_1} based on the Euler method leads to the following discrete-time projection neural network (DTPNN)  
\begin{equation}\label{RNN_2}
\xt_{k+1}=\xt_k+\lambda_k(-\xt_{k}+P_{\Omega}(\xt_k-\nabla f(\xt_k))),
\end{equation}
where $\lambda_k$ is the step size related of the Euler discretization. We can easily derive a similar discrete-time projection neural network for the nonnegative CPD, which is the discrete-time version of the {neurodynamic} model \eqref{CON_1}-\eqref{CON_33} presented as follows:
\begin{eqnarray}\label{CON_2}
\ab_{k+1}&=&\ab_k+\lambda_k(-\ab_k+P_{\Omega}(\ab_k-\nabla_{\bf a} F(\ab_k,\bb_k,\cc_k))),\\\label{CON_22}
\bb_{k+1}&=&\bb_k+\lambda_k(-\bb_k+P_{\Omega}(\bb_k-\nabla_{\bf b} F(\ab_k,\bb_k,\cc_k))),\\\label{CON_223}
\cc_{k+1}&=&\cc_k+\lambda_k(-\cc_k+P_{\Omega}(\cc_k-\nabla_{\bf c} F(\ab_k,\bb_k,\cc_k))).
\end{eqnarray}
The iterations in \eqref{CON_2}-\eqref{CON_223} are fully explicit but of course the iterates can be updated in the Gauss–Seidel manner as presented in Algorithm \ref{ALG:TSVD_2} and {is known to be more efficient \cite{burden1997numerical,golub2013matrix}.} Moreover, we can include more implicitness to have better stability for the model. More precisely, the neurodynamic model \eqref{CON_2}-\eqref{CON_223}  is replaced by
\begin{eqnarray}\label{CON_3}
\ab_{k+1}&=&\ab_k+\lambda_k(-\ab_{k+1}+P_{\Omega}(\ab_k-\nabla_{\bf a} F(\ab_k,\bb_k,\cc_k))),\\
\bb_{k+1}&=&\bb_k+\lambda_k(-\bb_{k+1}+P_{\Omega}(\bb_k-\nabla_{\bf b} F(\ab_k,\bb_k,\cc_k))),\\
\cc_{k+1}&=&\cc_k+\lambda_k(-\cc_{k+1}+P_{\Omega}(\cc_k-\nabla_{\bf c} F(\ab_k,\bb_k,\cc_k))).
\end{eqnarray}
which can be simplified to 
\begin{eqnarray}\label{CON_4}
\ab_{k+1}&=&\frac{1}{\lambda_k+1}(\ab_k+ P_{\Omega}(\ab_k-\nabla_{\bf a} F(\ab_k,\bb_k,\cc_k))),\\\label{CON_41}
\bb_{k+1}&=&\frac{1}{\lambda_k+1}(\bb_k+ P_{\Omega}(\bb_k-\nabla_{\bf b} F(\ab_k,\bb_k,\cc_k))),\\\label{CON_42}
\cc_{k+1}&=&\frac{1}{\lambda_k+1}(\cc_k + P_{\Omega}(\cc_k-\nabla_{\bf c} F(\ab_k,\bb_k,\cc_k))).
\end{eqnarray}
Moreover, these techniques can be further accelerated by using the Hessian preconditioning as in \eqref{CON_1}-\eqref{CON_33} for the continuous case. For example, the gradient terms $\nabla_{\bf a} F(\ab_k,\bb_k,\cc_k)$ in \eqref{CON_4}-\eqref{CON_42} are substituted by $\nabla_{\bf a} F(\ab_k,\bb_k,\cc_k){\bf P}^{-1}_a$ where 
$${\bf P}_a= (\C^{T}\C)\ast(\B^T\B)+\delta{\bf I}$$ 
$\delta$ is a regularization parameter and ${\bf I}$ is the identity matrix. An alternating method is to use the cubic regularization trick \cite{nesterov2006cubic}. 
Efficiency of all the variants will be demonstrated in the experiment section.  

The interesting question is the connection between the equilibrium points of continuous-time neurodynamic \eqref{CON_1}-\eqref{CON_33} and discrete-time neurodynamic \eqref{CON_2}-\eqref{CON_223}. The next Lemma responds to this question.

\begin{lemma}\label{Lemm}
The equilibrium points of the continuous-time neurodynamic \eqref{CON_1}-\eqref{CON_33} and discrete-time neurodynamic \eqref{CON_2}-\eqref{CON_223} are equivalent.
\end{lemma}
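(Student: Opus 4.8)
The plan is to characterize the equilibrium set of each system separately, show that both coincide with the KKT set of problem \eqref{NonOPT}, and then invoke the one-to-one correspondence between one-layer RNN equilibria and KKT points established in \cite{yan2014collective}. Since the two systems share the same objective $F$ and feasible region $\Omega$ (the nonnegative orthant, on which $P_\Omega=[\cdot]_+$), the whole argument reduces to checking that their stationarity equations describe the same complementarity conditions.

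First I would compute the equilibria of the continuous system. Setting $d\ab/dt=d\bb/dt=d\cc/dt=0$ in \eqref{CON_1}-\eqref{CON_33} and using $\epsilon_i>0$, an equilibrium $(\ab^*,\bb^*,\cc^*)$ must satisfy the coupled projection equations $\ab^*=[\ab^*-H_{\ab}^{-1}\nabla_{\ab}F(\ab^*,\bb^*,\cc^*)]_+$, and analogously in $\bb^*$ and $\cc^*$. Next I would compute the equilibria of the discrete system: a fixed point of \eqref{CON_2}-\eqref{CON_223} satisfies $\ab_{k+1}=\ab_k=\ab^*$, which forces $\lambda_k(-\ab^*+P_\Omega(\ab^*-\nabla_{\ab}F(\ab^*,\bb^*,\cc^*)))=0$; since $\lambda_k>0$ this reduces to $\ab^*=P_\Omega(\ab^*-\nabla_{\ab}F)=[\ab^*-\nabla_{\ab}F]_+$, and similarly for $\bb^*$ and $\cc^*$.

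The core step is to turn both fixed-point systems into the same complementarity conditions. For the nonnegative orthant the projection is the element-wise $[\cdot]_+$, and the identity $\xx=[\xx-\vv]_+$ holds if and only if $\xx\geq 0$, $\vv\geq 0$, and $\xx\ast\vv=0$. Applying this to the discrete equilibrium with $\vv=\nabla_{\ab}F$ yields exactly the KKT conditions of the $\ab$-subproblem, namely $\ab^*\geq 0$, $\nabla_{\ab}F\geq 0$, and $\ab^*\ast\nabla_{\ab}F=0$; the blocks in $\bb^*$ and $\cc^*$ give the remaining KKT relations, so the discrete equilibria coincide with the KKT points of \eqref{NonOPT}. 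The un-preconditioned continuous flow (the case $H=\I$) produces a literally identical equation and hence the same KKT set, and the correspondence of \cite{yan2014collective} then closes the loop. The last ingredient is to argue that inserting the symmetric positive definite preconditioner $H_{\ab}=\mathbf{P}_{\mathbf{A}}\otimes\I_{I_1}$ does not alter this fixed-point set.

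The main obstacle I anticipate is precisely this preconditioner mismatch. The continuous flow scales the gradient by the dense SPD matrix $H_{\ab}^{-1}$, whereas the element-wise projection $[\cdot]_+$ does not commute with a dense scaling, so one cannot simply cancel $H_{\ab}^{-1}$: in general $\ab^*=[\ab^*-H_{\ab}^{-1}\nabla_{\ab}F]_+$ encodes $(H_{\ab}^{-1}\nabla_{\ab}F)_{\mathcal I}=0$ with reduced-gradient positivity only on the active set, which is not obviously the same as $\nabla_{\ab}F\geq 0$ componentwise. To handle this I would recast both equations as variational inequalities and exploit the active/inactive index partition: on the inactive set the stationarity forces the (reduced) gradient to vanish, so the effect of $H_{\ab}^{-1}$ there is immaterial, while on the active set one must verify, using positive definiteness of $H_{\ab}$, $H_{\bb}$, $H_{\cc}$, that the sign conditions are preserved. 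Establishing this invariance of the fixed-point set under SPD scaling is the delicate part; once it is in place, the remaining identifications are routine substitutions and the equivalence follows immediately.
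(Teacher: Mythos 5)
Your characterization of the equilibria is correct and in fact more careful than the paper's own argument: a fixed point of either system satisfies a projection equation, and on the nonnegative orthant $\xx=[\xx-\vv]_+$ is equivalent to the complementarity conditions $\xx\geq 0$, $\vv\geq 0$, $\xx\ast\vv=0$, \emph{not} to $\vv=0$ as the paper asserts (that stronger conclusion holds only at interior equilibria). The paper's proof follows the same overall route you sketch first, namely set the time derivative, respectively the difference $\ab_{k+1}-\ab_k$, to zero and compare the resulting fixed-point equations, but it silently writes the continuous system \eqref{CON_1}--\eqref{CON_33} \emph{without} the preconditioners $H_{\bf a}^{-1},H_{\bf b}^{-1},H_{\bf c}^{-1}$; the two fixed-point equations then become literally identical and the equivalence is immediate. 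In other words, the paper proves the lemma for the unpreconditioned flow and never confronts the mismatch you identified.

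That mismatch is exactly where your proposal has a genuine gap, and unfortunately the step you defer (``invariance of the fixed-point set under SPD scaling'') is not merely delicate but false in general. Take $n=2$, $H^{-1}=\begin{pmatrix}2&-1\\-1&2\end{pmatrix}$ (symmetric positive definite), the boundary point $\xx^*=(0,0)^T$, and a gradient value $\vv=(1,3)^T$. Then $[\xx^*-\vv]_+=(0,0)^T=\xx^*$, so $\xx^*$ is an equilibrium of the unpreconditioned (discrete) iteration, while $H^{-1}\vv=(-1,5)^T$ gives $[\xx^*-H^{-1}\vv]_+=(1,0)^T\neq\xx^*$, so it is not an equilibrium of the preconditioned flow. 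Positive definiteness alone therefore cannot ``preserve the sign conditions'' on the active set, and the situation is realistic for CPD: the inverse of ${\bf P}_{\A}=(\C^T\C)\ast(\B^T\B)$ generically has negative off-diagonal entries. Consequently your plan can only be completed under extra hypotheses --- for example, equilibria lying in the interior of the orthant (where complementarity forces $\nabla F=0$, a condition invariant under any invertible scaling), or a diagonal positive preconditioner --- and the lemma as literally stated (preconditioned continuous system versus unpreconditioned discrete one) needs the same caveat. You were right to distrust this step; the honest resolution is either to add such a hypothesis or to state and prove the lemma, as the paper implicitly does, for the unpreconditioned continuous system.
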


\begin{proof}
Similar to the proof presented in \cite{li2020discrete}, we can see that the necessary conditions for the CTPNN \eqref{CON_1}-\eqref{CON_33} to be stable are
\begin{eqnarray*}\label{CONM}
\epsilon_1\frac{d{\bf a}}{dt}&=&-{\bf a}+P_{\Omega}({\bf a}-\nabla_{{\bf a}}F({\bf a},{\bf b},{\bf c}))=0,\\
\epsilon_2\frac{d{\bf b}}{dt}&=&-{\bf b}+P_{\Omega}({\bf b}-\nabla_{{\bf b}}F({\bf a},{\bf b},{\bf c}))=0,\\
\epsilon_3\frac{d{\bf c}}{dt}&=&-{\bf c}+P_{\Omega}({\bf c}-\nabla_{{\bf c}}F({\bf a},{\bf b},{\bf c}))=0,
\end{eqnarray*}
which means $\nabla_{{\bf a}}F=\nabla_{{\bf b}}F=\nabla_{{\bf c}}F=0$.
Also, the necessary conditions for the DTPNN \eqref{CON_2}-\eqref{CON_223} to be stable are
\begin{eqnarray*}\label{CONN}
\ab_{k+1}-\ab_k&=&+\lambda_k(-\ab_k+P_{\Omega}(\ab_k-\nabla_{\bf a} F(\ab_k,\bb_k,\cc_k)))\\ &=&0,\hspace{5.5cm}\\
\bb_{k+1}-\bb_k&=&+\lambda_k(-\bb_k+P_{\Omega}(\bb_k-\nabla_{\bf b} F(\ab_k,\bb_k,\cc_k)))\\ &=&0,\hspace{5.5cm}\\
\cc_{k+1}-\cc_k&=&+\lambda_k(-\cc_k+P_{\Omega}(\cc_k-\nabla_{\bf c} F(\ab_k,\bb_k,\cc_k)))\\ &=&0.\hspace{5.5cm}
\end{eqnarray*}
or $\nabla_{{\bf a}}F=\nabla_{{\bf b}}F=\nabla_{{\bf c}}F=0$. So, the CTPNN \ref{CON_1} and the DTPNN \ref{CON_2} have the same gradient. This means that they have equivalent equilibrium points and this completes the proof.  
\end{proof}
From Lemma \ref{Lemm}, it turns out that setting appropriate step-size $\lambda$ and using the discrete-time neurodynamic \eqref{CON_2}-\eqref{CON_223}, we can converge to equilibrium points of the continuous time neurodynamic \eqref{EQ:4}. We will discuss the upper bound of the step-size $\lambda_k$ for the DTPNN \eqref{CON_2}-\eqref{CON_223} to be convergent in Theorem \ref{Thmm}. The discrete-time neural network \eqref{CON_2}-\eqref{CON_223} with a small step-size $\lambda_k$ is not fast and  for large step sizes has unstable behavior (divergent). So, the step-size should be adaptively chosen at each iteration. For instance the Armijo inequality
\begin{equation}\label{Armijo}
f({\bf x}_{k+1})-f({\bf x}_k)<\alpha\lambda_k\nabla f({\bf x}_k)^T({\bf x}_{k+1}-{\bf x}_k),
\end{equation}
can be used where $0<\alpha<1$. We have used the backtracking approach as an efficient line search method, that is, at each iteration, the Armijo inequality \eqref{Armijo} is checked and if it is not satisfied, the step-size is multiplied by a {positive} constant $0<\beta<1$, and again the Armijo inequality is checked for the new obtained step-size. This procedure is summarized in Algorithm \ref{ALG:TSVD_2},
where
\begin{eqnarray}
\nonumber
R({\bf A}^{(k)},{\bf B}^{(k)},{\bf C}^{(k)})=F({\bf A}^{(k+1)},{\bf B}^{(k+1)},{\bf C}^{(k+1)})
-F({\bf A}^{(k)},{\bf B}^{(k)},{\bf C}^{(k)}),
\end{eqnarray}
Each discrete-time projection neural network \eqref{CON_2}-\eqref{CON_223} converges to an equilibrium point of \eqref{CON_1}-\eqref{CON_33}, or a local minimum of \eqref{NonOPT}. To approach a global minimum, we can develop collaborative discrete-time projection neural networks by using multiple  discrete-time projection neural networks so that they can collaborate using PSO.

\begin{algorithm}
\caption{DTPNN}
\label{ALG:TSVD_2}
\begin{algorithmic}[1]
\STATE{{\bf Input:} A data tensor $\underline{\bf X}$, a target tensor rank $R$, maximum of iterations $k_{max}$, an error bound $\epsilon$, parameters $\alpha,\,\beta$, ${\lambda}_k^{\bf a},\,{\lambda}_k^{\bf b},\,{\lambda}_k^{\bf c}$ }
\STATE{{\bf Output:} The nonnegative factor matrices ${\bf A},\,{\bf B},\,{\bf C}$ of the CPD}
\STATE{Define $P:=T:=\{ \{1\},\ldots,\{d\}$\}, $k=0$}
\WHILE{$\|R({\bf A}^{(k)},{\bf B}^{(k)},{\bf C}^{(k)})\|\geq\epsilon$ or $k<k_{max}$}

\WHILE{\eqref{Armijo} is not satisfied}
\STATE{
${\lambda}^{\ab}_{k}{\xleftarrow{}}\beta{\lambda}^{\ab}_k$\\
$\ab_{k+1}=\ab_k+{\lambda}^{\ab}_k(-\ab_k+[\ab_k-\nabla_{\bf a} F(\ab_k,\bb_k,\cc_k)]_{+})$
}
\ENDWHILE
\WHILE{\eqref{Armijo} is not satisfied}
\STATE{${\lambda}^{\bb}_{k}{\xleftarrow{}}\beta{\lambda}^{\bb}_k$\\
$\bb_{k+1}=\bb_k+{\lambda}^{\bb}_k(-\bb_k + [\bb_k-\nabla_{\bf b} F(\ab_{k+1},\bb_k,\cc_k)]_{+})$}
\ENDWHILE

\WHILE{\eqref{Armijo} is not satisfied}
\STATE{
${\lambda}^{\cc}_{k}{\xleftarrow{}}\beta{\lambda}^{\cc}_k$\\
$\cc_{k+1}=\cc_k+{\lambda}^{\cc}_k(-\cc_k + [\cc_k-\nabla_{\bf c} F(\ab_{k+1},\bb_{k+1},\cc_k)]_{+})$
}
\ENDWHILE
\STATE{$k=k+1$}
\ENDWHILE
\end{algorithmic}
\end{algorithm}

We now present two lemmas that are used in our stability and convergence analyses. Lemma \ref{Lem2} demonstrates that the neurodynamcis model \eqref{CON_2}-\eqref{CON_223} can be represented in an alternative form. This result along with Lemma \ref{Lem1}, help to derive an upper bound on the step-size $\lambda_k$ for the DTPNN to converge to an equilibrium point of neurodynamic system \eqref{CON_2}-\eqref{CON_223}. Note that Lemma \ref{Lem2} is for the general case of the activation function \eqref{acfunc}.

\begin{lemma}\label{Lem1} \cite{kinderlehrer2000introduction}
Given $\Omega \subseteq \mathbb{R}^n$ convex; for any $\xx \in \mathbb{R}^n$ and any $\yy \in \Omega$, we have
\[
(P_{\Omega}(\xx)-\xx)^T(\yy-P_{\Omega}(\xx))\geq 0.
\]
\end{lemma}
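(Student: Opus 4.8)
The plan is to exploit the defining optimality property of the projection: by definition $P_{\Omega}(\xx)$ is the minimizer of $\|\xx-\yy\|_2$ over $\yy\in\Omega$, so I would characterize it through a first-order optimality condition obtained by perturbing along a feasible direction. First I would abbreviate $\hat{\xx}:=P_{\Omega}(\xx)$ and fix an arbitrary $\yy\in\Omega$. Since $\Omega$ is convex, the segment $\hat{\xx}+t(\yy-\hat{\xx})=(1-t)\hat{\xx}+t\yy$ lies in $\Omega$ for every $t\in[0,1]$, which supplies an admissible family of competitors against which the minimality of $\hat{\xx}$ can be tested.

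Next I would write down the minimality inequality $\|\xx-\hat{\xx}\|_2^2\le\|\xx-\hat{\xx}-t(\yy-\hat{\xx})\|_2^2$ and expand the right-hand side as $\|\xx-\hat{\xx}\|_2^2-2t(\xx-\hat{\xx})^T(\yy-\hat{\xx})+t^2\|\yy-\hat{\xx}\|_2^2$. Cancelling the common term and dividing by $t>0$ leaves $0\le -2(\xx-\hat{\xx})^T(\yy-\hat{\xx})+t\|\yy-\hat{\xx}\|_2^2$. Letting $t\to 0^+$ removes the quadratic remainder and yields $(\xx-\hat{\xx})^T(\yy-\hat{\xx})\le 0$, which is exactly $(P_{\Omega}(\xx)-\xx)^T(\yy-P_{\Omega}(\xx))\ge 0$ after accounting for the sign flip $\xx-\hat{\xx}=-(\hat{\xx}-\xx)$.

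There is no genuine obstacle here: the result is the standard variational-inequality characterization of the metric projection onto a convex set, and the only ingredients are the convexity of $\Omega$ (to guarantee the feasible segment) together with the quadratic expansion above. The one point deserving a sentence of care is the existence and uniqueness of $P_{\Omega}(\xx)$, which additionally relies on $\Omega$ being closed; I would note that the feasible sets arising in the neurodynamic models of this paper (the nonnegative orthant, the box constraints of \eqref{acfunc}, and the unit simplex) are all closed and convex, so the projection is well defined and the argument applies verbatim.
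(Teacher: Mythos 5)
Your proof is correct. Note that the paper itself does not prove this lemma at all: it is quoted directly from the reference \cite{kinderlehrer2000introduction} (Kinderlehrer--Stampacchia) and used as a black box in the proof of Theorem \ref{Thm_pro}, so there is no in-paper argument to compare against. What you supply is precisely the standard variational-inequality characterization of the metric projection that the cited book contains: test the minimality of $P_{\Omega}(\xx)$ against the feasible segment $(1-t)P_{\Omega}(\xx)+t\yy$, expand the square, divide by $t$, and let $t\to 0^+$. Your added remark about closedness is a genuine (if minor) improvement on the paper's statement: as written, the lemma assumes only that $\Omega$ is convex, which does not guarantee that $P_{\Omega}(\xx)$ exists; existence requires $\Omega$ nonempty and closed (uniqueness then follows from convexity). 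Since the feasible sets actually used in the paper---the nonnegative orthant and the box constraints of \eqref{acfunc}---are closed, this is a gap in the statement's hypotheses rather than in your argument, and your proof applies verbatim to every case the paper needs.
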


\begin{lemma}\label{Lem2} Assume $\qq_{k,{\bf a}}=\ab_k-\nabla F_{\bf a}(\ab_k,\bb_k,\cc_k)$, $\qq_{k,{\bf b}}=\bb_k-\nabla F_{\bf b}(\ab_k,\bb_k,\cc_k)$, $\qq_{k,{\bf c}}=\cc_k-\nabla_{\bf c} F(\ab_k,\bb_k,\cc_k)$, then there exist two functions $\alpha(.)$ and $\beta(.)$ for which the following discrete-time projection neural network
\begin{eqnarray}\label{CO}
\ab_{k+1}^{i}&=&\ab_k^{i}+\lambda_k(-\ab^{i}_k+P_{\Omega}(\qq^{i}_{k,{\bf a}})),\\
\bb^{i}_{k+1}&=&\bb^{i}_k+\lambda_k(-\bb^{i}_k+P_{\Omega}(\qq^{i}_{k,{\bf b}})),\\
\cc^{i}_{k+1}&=&\cc^{i}_k+\lambda_k(-\cc^{i}_k+P_{\Omega}(\qq^{i}_{k,{\bf c}})).
\end{eqnarray} 
is equivalent to 
\begin{eqnarray}\label{CO_2}
\ab^{i}_{k+1}&=&\ab^{i}_k-\gamma^i_k(\nabla_{\bf a} f)^{i},\\
\bb^{i}_{k+1}&=&\bb^{i}_k-\gamma^i_k(\nabla_{\bf b} f)^{i},\\
\cc^{i}_{k+1}&=&\cc^{i}_k-\gamma^i_k(\nabla_{\bf c} f)^{i},
\end{eqnarray} 
where 
\begin{eqnarray}\label{CO_3}
\gamma^i_k=\left\{\begin{aligned}
\lambda^i_k,\quad\qq_{k,\tau}^{i}\in[l_i,u_i],\,\,\,\tau={\bf a},\,{\bf b},\,{\bf c}\,\\
\lambda^i_k\alpha_i(\tau_k),\,\,\,\qq_{k,\tau}^{i}>u_i,\,\,\,\tau={\bf a},\,{\bf b},\,{\bf c}\\
\lambda^i_k\beta_i(\tau_k),\,\,\,\qq_{k,\tau}^{i}<l_i,\,\,\,\tau={\bf a},\,{\bf b},\,{\bf c}.\end{aligned}\right.
\end{eqnarray}

\end{lemma}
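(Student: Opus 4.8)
The plan is to prove the equivalence coordinate-by-coordinate, exploiting the explicit piecewise form of the box-constraint activation function \eqref{acfunc}. Since the update \eqref{CO} acts element-wise (the projection $P_{\Omega}$ is applied to each entry separately), it suffices to fix a single index $i$ and a single block, say $\ab$, and to show that the scalar recursion $\ab_{k+1}^{i}=\ab_k^{i}+\lambda_k(-\ab_k^{i}+P_{\Omega}(\qq_{k,{\bf a}}^{i}))$ can be rewritten in the gradient-step form $\ab_{k+1}^{i}=\ab_k^{i}-\gamma_k^{i}(\nabla_{\bf a} f)^{i}$ with the $\gamma_k^{i}$ prescribed in \eqref{CO_3}; the argument for $\bb$ and $\cc$ is verbatim the same.

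First I would substitute $\qq_{k,{\bf a}}^{i}=\ab_k^{i}-(\nabla_{\bf a} F)^{i}$ and split into the three regimes dictated by \eqref{acfunc}. If $\qq_{k,{\bf a}}^{i}\in[l_i,u_i]$, then $P_{\Omega}(\qq_{k,{\bf a}}^{i})=\qq_{k,{\bf a}}^{i}$, the two $-\ab_k^{i}$ contributions cancel, and the recursion collapses to $\ab_{k+1}^{i}=\ab_k^{i}-\lambda_k(\nabla_{\bf a} F)^{i}$, i.e. the first branch $\gamma_k^{i}=\lambda_k$. If $\qq_{k,{\bf a}}^{i}>u_i$, then $P_{\Omega}(\qq_{k,{\bf a}}^{i})=u_i$ and the update reduces to $\ab_{k+1}^{i}=\ab_k^{i}+\lambda_k(u_i-\ab_k^{i})$; forcing this to match $\ab_k^{i}-\gamma_k^{i}(\nabla_{\bf a} F)^{i}$ yields $\gamma_k^{i}=\lambda_k(\ab_k^{i}-u_i)/(\nabla_{\bf a} F)^{i}$, so I would set $\alpha_i(\ab_k):=(\ab_k^{i}-u_i)/(\nabla_{\bf a} F)^{i}$. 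The symmetric calculation for $\qq_{k,{\bf a}}^{i}<l_i$ gives $\beta_i(\ab_k):=(\ab_k^{i}-l_i)/(\nabla_{\bf a} F)^{i}$. Collecting the three cases reproduces exactly the piecewise formula \eqref{CO_3}.

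The one genuinely delicate point, and the main obstacle, is the well-definedness of the ratios defining $\alpha_i$ and $\beta_i$, which are vacuous when $(\nabla_{\bf a} F)^{i}=0$. I would dispose of this by first observing that the recursion preserves feasibility: writing the update as the convex combination $\ab_{k+1}^{i}=(1-\lambda_k)\ab_k^{i}+\lambda_k P_{\Omega}(\qq_{k,{\bf a}}^{i})$ and using $0\le\lambda_k\le 1$ together with $P_{\Omega}(\qq_{k,{\bf a}}^{i})\in[l_i,u_i]$, one sees that $\ab_k^{i}\in[l_i,u_i]$ is maintained for every $k$. Feasibility then fixes the sign of the denominator precisely on the regimes where it is invoked: in the second branch $\qq_{k,{\bf a}}^{i}>u_i\ge\ab_k^{i}$ forces $(\nabla_{\bf a} F)^{i}=\ab_k^{i}-\qq_{k,{\bf a}}^{i}<0$, and in the third branch $\qq_{k,{\bf a}}^{i}<l_i\le\ab_k^{i}$ forces $(\nabla_{\bf a} F)^{i}>0$, so in both cases the denominator is strictly nonzero and $\alpha_i$, $\beta_i$ are well-defined. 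Repeating the identical case analysis for the $\bb$ and $\cc$ blocks then establishes the claimed equivalence between \eqref{CO} and \eqref{CO_2}, completing the proof.
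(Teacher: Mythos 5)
Your proof is correct and follows essentially the same route as the paper's appendix proof: a three-case analysis according to whether $\qq_{k,\tau}^{i}$ lies inside, above, or below the box $[l_i,u_i]$, using feasibility of the current iterate to fix the sign of the gradient in the two boundary regimes. If anything, your version is more complete than the paper's, which merely asserts the existence of nonnegative functions $\alpha_i(\cdot)$ and $\beta_i(\cdot)$, whereas you construct them explicitly as $\alpha_i=(\ab_k^{i}-u_i)/(\nabla_{\bf a}F)^{i}$ and $\beta_i=(\ab_k^{i}-l_i)/(\nabla_{\bf a}F)^{i}$ and verify that these denominators cannot vanish precisely in the cases where the formulas are invoked.
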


\begin{proof}
See the Appendix.
\end{proof}

\section{Stability and convergence analysis of DTPNN}\label{Sec:stab}
In this section, we discuss the stability and convergence analyses of the proposed approach. We first start with the stability analysis. Here, the main question which needs to be answered is for what $\lambda_k$, the discrete-time neural network \eqref{CON_2}-\eqref{CON_223} is stable and converges to an equilibrium point of \eqref{CON_1}-\eqref{CON_33}. The next Lemma provides such information for a one-layer projection neural network \eqref{RNN_2}.

\begin{lemma}\label{Thmm} \cite{che2018nonnegative}
The one-layer discrete-time projection neural network \eqref{RNN_2} is stable in the Lyapunov sense and converges to one of its equilibrium points for any initial states ${\bf x}_{0}$ provided
\[
\max(0,1-\sqrt{c})\leq\lambda_k\leq(1+\sqrt{c}),
\]
where $c=(1-2\|\alpha_k^2\nabla f({\bf x}_{k})\|_2^2)/(\|[{\bf x}_{k}-\nabla f({\bf x}_{k})]_{+}-{\bf x}_{k}\|_2^2)
$ and $\alpha_k=(\sqrt{\gamma^i_k},\ldots,\sqrt{\gamma^i_k})^T$, $\gamma^i_k$ defined as in \eqref{CO_3}.
\end{lemma}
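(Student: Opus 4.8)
The plan is to establish Lyapunov stability through an energy function measuring the squared distance to a fixed point, and then to bootstrap stability into convergence. Fix an equilibrium point $\xt^{\ast}$ of \eqref{RNN_2}, characterised by $\xt^{\ast}=P_{\Omega}(\xt^{\ast}-\nabla f(\xt^{\ast}))$, and take $V(\xt_k)=\tfrac12\|\xt_k-\xt^{\ast}\|_2^{2}$. Abbreviating the projection residual by $d_k:=[\xt_k-\nabla f(\xt_k)]_{+}-\xt_k=P_{\Omega}(\xt_k-\nabla f(\xt_k))-\xt_k$, the update \eqref{RNN_2} becomes $\xt_{k+1}=\xt_k+\lambda_k d_k$, and expanding the square gives the exact increment
\[
V(\xt_{k+1})-V(\xt_k)=\lambda_k\,d_k^{T}(\xt_k-\xt^{\ast})+\tfrac12\lambda_k^{2}\|d_k\|_2^{2}.
\]
The whole task then reduces to showing that the stated window for $\lambda_k$ forces the right-hand side to be non-positive.

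To control the cross term $d_k^{T}(\xt_k-\xt^{\ast})$ I would invoke the variational inequality of Lemma \ref{Lem1}: applied with $\xx=\xt_k-\nabla f(\xt_k)$, so that $P_{\Omega}(\xx)=\xt_k+d_k$, and with $\yy=\xt^{\ast}\in\Omega$, it yields $(d_k+\nabla f(\xt_k))^{T}\big((\xt^{\ast}-\xt_k)-d_k\big)\ge 0$. Rearranging isolates $d_k^{T}(\xt_k-\xt^{\ast})$ in terms of $-\|d_k\|_2^{2}$ and gradient-dependent quantities. Next I would use Lemma \ref{Lem2} to rewrite the iteration in its equivalent scaled-gradient form $\xt_{k+1}^{i}=\xt_k^{i}-\gamma_k^{i}(\nabla f)^{i}$; this identifies the displacement norm $\|\xt_{k+1}-\xt_k\|_2^{2}$ with the term $\|\alpha_k^{2}\nabla f(\xt_k)\|_2^{2}$ (the Hadamard product being understood) appearing in the numerator of $c$, and it lets the three regimes of \eqref{CO_3} (the argument $\qq_{k}^{i}$ lying in $[l_i,u_i]$, above $u_i$, or below $l_i$) be treated uniformly through the factors $\alpha_i,\beta_i$. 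Substituting the defining relation $c\,\|d_k\|_2^{2}=1-2\|\alpha_k^{2}\nabla f(\xt_k)\|_2^{2}$ into the collected estimate turns the increment into a quadratic in $\lambda_k$ whose non-positivity is exactly $(\lambda_k-1)^{2}\le c$; intersecting this with $\lambda_k\ge 0$ reproduces $\max(0,1-\sqrt{c})\le\lambda_k\le 1+\sqrt{c}$.

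With $V$ non-increasing and bounded below by $0$, the sequence $\{\xt_k\}$ remains in a bounded sublevel set, which is Lyapunov stability. To upgrade this to convergence I would telescope the per-step decrements: since $V(\xt_k)$ is monotone and bounded it converges, so $\sum_{k}\big(V(\xt_k)-V(\xt_{k+1})\big)<\infty$, and since the decrement dominates a nonnegative multiple of $\|d_k\|_2^{2}$ whenever $\lambda_k$ is kept strictly inside the admissible window, summability forces $d_k\to 0$, i.e. $\xt_{k+1}-\xt_k\to 0$. Boundedness then furnishes a convergent subsequence whose limit $\bar{\xt}$ satisfies $\bar{\xt}=P_{\Omega}(\bar{\xt}-\nabla f(\bar{\xt}))$ by continuity of the projection and of $\nabla f$, hence is an equilibrium point; re-running the monotonicity argument with $\xt^{\ast}=\bar{\xt}$ pins the entire sequence to $\bar{\xt}$.

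I expect the main obstacle to lie in the second step: assembling the projection inequality together with the piecewise step-sizes \eqref{CO_3} so that the gradient cross-terms collapse precisely into the combination $1-2\|\alpha_k^{2}\nabla f(\xt_k)\|_2^{2}$ that defines the numerator of $c$. Each coordinate must be inspected in its own regime, and one must verify that the non-interior cases ($\qq_{k}^{i}>u_i$ or $\qq_{k}^{i}<l_i$) can only tighten rather than violate the inequality; the smallness of the scaled gradient, implicit in requiring $c>0$, is what keeps the quadratic bound meaningful. A secondary but genuine difficulty is the final passage from $d_k\to 0$ to convergence of the full sequence when the equilibrium set is not a singleton, where a discrete LaSalle-type argument is needed to confine the limit set.
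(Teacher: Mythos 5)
Your proposal is correct and follows essentially the same route the paper takes: the paper cites Lemma~\ref{Thmm} from \cite{che2018nonnegative} without reproving it, but its appendix proof of the multi-factor analogue (Theorem~\ref{Thm_pro}) uses exactly your argument --- the squared-distance Lyapunov function, the variational inequality of Lemma~\ref{Lem1} to bound the cross term, Lemma~\ref{Lem2} to recast the update in scaled-gradient form yielding the quadratic condition $(\lambda_k-1)^2\le c$, and LaSalle's invariance principle plus boundedness to pass from $d_k\to 0$ to convergence to an equilibrium point.
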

Similarly, for the multi-layer discrete-time neural network \eqref{CON_2}-\eqref{CON_223}, we have the following theorem.
\begin{theorem}\label{Thm_pro}
The discrete-time projection neural network \eqref{CON_2}-\eqref{CON_223}, is stable in the Lyapunov sense and converges to one of its equilibrium points of any initial states $({\bf a}^{(0)},\,{\bf b}^{(0)},\,{\bf c}^{(0)})$ if 
\begin{equation}\label{interval}
\max(0,1-\sqrt{c})\leq\lambda_k\leq(1+\sqrt{c}),
\end{equation}
where 
\begin{equation*}
\small
c=\left(1-2\norm{\begin{bmatrix}\gamma^T_k\nabla_{\bf a} f\\\gamma^T_k\nabla_{\bf b} f\\\gamma^T_k\nabla_{\bf c} f\end{bmatrix}}_2^2\right)\left(\norm{\begin{bmatrix}{\bf a}_k - [{\bf a}_k-\nabla_{\bf a}f]_{+}\\{\bf b}_k-[{\bf b}_k-\nabla_{\bf b}f]_{+}\\{\bf c}_k-[{\bf c}_k-\nabla_{\bf c}f]_{+}\end{bmatrix}}_2^2\right)^{-1}.
\end{equation*}

\end{theorem}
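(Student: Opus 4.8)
The plan is to reduce the three-block system \eqref{CON_2}-\eqref{CON_223} to the single-layer discrete-time projection neural network \eqref{RNN_2} by stacking the factor matrices, and then to invoke Lemma \ref{Thmm} essentially verbatim. Introduce the concatenated state $\xx_k=[\ab_k^T,\bb_k^T,\cc_k^T]^T\in\mathbb{R}^{(I_1+I_2+I_3)R}$ and let $\Omega^{\times}=\Omega\times\Omega\times\Omega$ be the nonnegative orthant of that product space. Since \eqref{CON_2}-\eqref{CON_223} is fully explicit, all three partial gradients are evaluated at the common point $(\ab_k,\bb_k,\cc_k)$, so the stacked vector $[\nabla_{\bf a}F^T,\nabla_{\bf b}F^T,\nabla_{\bf c}F^T]^T$ is exactly the full gradient $\nabla F(\xx_k)$ of the joint objective at $\xx_k$. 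Moreover, because $\Omega^{\times}$ is a Cartesian product and the Euclidean norm is separable, the projection onto $\Omega^{\times}$ decouples blockwise; applying $P_{\Omega}$ to each block coincides with the single projection $P_{\Omega^{\times}}$. Consequently the three equations collapse into
\[
\xx_{k+1}=\xx_k+\lambda_k\bigl(-\xx_k+P_{\Omega^{\times}}(\xx_k-\nabla F(\xx_k))\bigr),
\]
which is precisely \eqref{RNN_2} with $f=F$ and feasible set $\Omega^{\times}$.

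With this identification, Lemma \ref{Thmm} applies directly to the stacked map. The stability interval $\max(0,1-\sqrt{c})\leq\lambda_k\leq 1+\sqrt{c}$ and the constant $c$ transfer without change: evaluating the numerator norm $\norm{\alpha_k^2\nabla f(\xx_k)}_2^2$ and the denominator norm $\norm{[\xx_k-\nabla f(\xx_k)]_{+}-\xx_k}_2^2$ on the concatenated quantities reproduces exactly the block-partitioned expression for $c$ stated in the theorem, with the per-coordinate gains $\gamma_k^i$ supplied by Lemma \ref{Lem2}. Lemma \ref{Thmm} then yields Lyapunov stability and convergence of $\xx_k$ to an equilibrium of the stacked system; unstacking identifies the limit with an equilibrium triple $(\ab^{*},\bb^{*},\cc^{*})$ of \eqref{CON_2}-\eqref{CON_223}, which by Lemma \ref{Lemm} is simultaneously an equilibrium of the continuous system \eqref{CON_1}-\eqref{CON_33}.

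The step I expect to require the most care is checking that the blockwise substitution reproduces the stated $c$. This rests on the gradient-descent representation of Lemma \ref{Lem2}, which rewrites each projected update as an effective step $\gamma_k^i(\nabla_{\tau}f)^i$ with the coordinatewise gains $\gamma_k^i$ of \eqref{CO_3}; one must verify that assembling these gains across the three blocks reproduces the scaling vector $\alpha_k=(\sqrt{\gamma_k^i},\ldots,\sqrt{\gamma_k^i})^T$ entering $c$, and that the box-membership cases of $\qq_{k,\tau}^i$ are handled uniformly over the stacked index. The underlying Lyapunov estimate---that $V(\xx)=\norm{\xx-\xx^{*}}_2^2$ is nonincreasing on the admissible $\lambda_k$ interval---is inherited from Lemma \ref{Thmm} and relies on the projection inequality of Lemma \ref{Lem1}. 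Since that lemma is stated for a general objective, the joint nonconvexity of $F$ introduces no extra difficulty, and no argument beyond the stacking reduction is needed.
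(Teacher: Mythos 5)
Your proposal is correct, but it takes a genuinely different route from the paper. You reduce the three-block explicit iteration \eqref{CON_2}--\eqref{CON_223} to the one-layer DTPNN \eqref{RNN_2} by stacking $({\bf a}_k,{\bf b}_k,{\bf c}_k)$ into a single state, observing that full explicitness (all partial gradients evaluated at the common iterate) makes the stacked partial-gradient vector equal the full gradient of $F$, that projection onto the product of nonnegative orthants decouples blockwise, and that the separability of the squared Euclidean norm turns the constant $c$ of Lemma \ref{Thmm} into exactly the block-partitioned constant in the theorem; Lemma \ref{Thmm} then delivers the conclusion as a black box. The paper instead re-derives the whole argument for the three-block system: it forms the error vector relative to an equilibrium $(\widehat{\bf a},\widehat{\bf b},\widehat{\bf c})$, expands the squared norm of the update, applies the projection inequality of Lemma \ref{Lem1} and the gradient-representation of Lemma \ref{Lem2}, exhibits the Lyapunov function $L({\bf a},{\bf b},{\bf c})=\norm{[{\bf a}-\widehat{\bf a};{\bf b}-\widehat{\bf b};{\bf c}-\widehat{\bf c}]}_2^2$, derives the admissible interval for $\lambda_k$ from the decrease condition, and concludes via LaSalle's invariance principle plus a boundedness/subsequence argument. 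Your reduction is shorter and conceptually sharper --- it makes explicit that the Jacobi-type scheme with a \emph{common} step size $\lambda_k$ is nothing other than the one-layer network on the product space, and correspondingly makes clear that the argument would not transfer to the Gauss--Seidel variant of Algorithm \ref{ALG:TSVD_2} or to block-dependent step sizes, where the stacking identity fails. What the paper's longer route buys is self-containedness and reusable machinery: the explicit Lyapunov function, the LaSalle invariant-set characterization of the limit points, and the boundedness argument are displayed rather than hidden inside the citation, which supports the subsequent analysis of the flexible-step algorithm in Theorem \ref{Thmconve}. The one step you flagged as delicate --- that assembling the per-coordinate gains $\gamma_k^i$ of \eqref{CO_3} across the three blocks reproduces the stated $c$ --- is indeed the only bookkeeping to check, and it goes through precisely because the stacked norms split as sums of the block norms.
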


\begin{proof}
See the Appendix.
\end{proof}

Theorem \ref{Thm_pro} is for the DTPNN with a specific step-size mentioned in the interval. Since, in algorithm \ref{ALG:TSVD_2}, the flexible step-size based on the backtracking procedure is utilized, here, we also analyze the stability and convergence of this algorithm.

\begin{theorem}\label{Thmconve}
Let the objective function $f$ is bounded below, the DTPNN algorithm generates triples $({\bf a}_k,{\bf b}_k,{\bf c}_k)$ for which the sequence $\{f({\bf a}_k,{\bf b}_k,{\bf c}_k)\}_{k=1}^{\infty}$ is monotonically non-increasing and reaches an equilibrium point of it. 
\end{theorem}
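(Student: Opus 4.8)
The plan is to prove the two assertions in turn: first that $\{F(\ab_k,\bb_k,\cc_k)\}$ is monotonically non-increasing, and then that the iterates approach an equilibrium point of \eqref{CON_1}-\eqref{CON_33}. Both rest on the observation that each block update of Algorithm \ref{ALG:TSVD_2} moves along a descent direction, so that the backtracking Armijo rule \eqref{Armijo} is consistent and forces a decrease of $F$.

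First I would show that the direction driving each block is a descent direction. Writing the $\ab$-update as $\ab_{k+1}=\ab_k+\lambda_k^{\ab} d_k$ with $d_k=-\ab_k+[\ab_k-\nabla_{\bf a}F]_{+}$, I would apply Lemma \ref{Lem1} with $\xx=\ab_k-\nabla_{\bf a}F$, so that $P_{\Omega}(\xx)=[\ab_k-\nabla_{\bf a}F]_{+}=\ab_k+d_k$, and with the feasible point $\yy=\ab_k\in\Omega$. This gives
$$\left(d_k+\nabla_{\bf a}F\right)^{T}(-d_k)\geq 0,$$
equivalently $\nabla_{\bf a}F^{T}d_k\leq-\|d_k\|_2^2\leq 0$, and the identical computation handles the $\bb$- and $\cc$-blocks via \eqref{EQ:2}-\eqref{EQ:3}. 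Consequently the right-hand side of \eqref{Armijo} for each block, namely $\alpha\lambda_k\nabla_{\bf a}F^{T}d_k$, is nonpositive, so every accepted step strictly decreases $F$ whenever $d_k\neq 0$. Since $F$ is a polynomial and hence continuously differentiable, the standard argument shows the inner backtracking loop terminates after finitely many reductions by $\beta$, so the step sizes are well defined.

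Chaining the three Gauss--Seidel block updates over one outer iteration then yields $F(\ab_{k+1},\bb_{k+1},\cc_{k+1})\leq F(\ab_k,\bb_k,\cc_k)$, which establishes the monotone non-increase. Because $F$ is bounded below by hypothesis, the sequence $\{F(\ab_k,\bb_k,\cc_k)\}$ converges and its successive decrements tend to zero. Using the block Armijo inequality together with $\nabla F^{T}d_k\leq-\|d_k\|_2^2$ gives, for each block, $\alpha\lambda_k\|d_k\|_2^2\leq F(\xx_k)-F(\xx_{k+1})\to 0$.

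The hard part is to upgrade the vanishing decrements into $\|d_k\|_2\to 0$, which requires the accepted step sizes to remain bounded away from zero. I would obtain this from a uniform Lipschitz bound on $\nabla F$ along the iterate trajectory: as long as the iterates stay in a bounded region --- which, in view of the scaling invariance of the Kruskal representation, one enforces by a boundedness assumption on the iterates or by column normalization between sweeps --- $\nabla F$ is Lipschitz there with some constant $L$, and the descent lemma shows any $\lambda\leq(1-\alpha)/L$ satisfies \eqref{Armijo}. Hence backtracking never pushes $\lambda_k$ below $\beta(1-\alpha)/L>0$, and $\alpha\lambda_k\|d_k\|_2^2\to 0$ then forces $\|d_k\|_2\to 0$ in every block, i.e. $-\ab_k+[\ab_k-\nabla_{\bf a}F]_{+}\to 0$ and likewise for $\bb,\cc$. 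By Lemma \ref{Lemm} this is exactly the equilibrium condition for \eqref{CON_1}-\eqref{CON_33}, so every accumulation point of $(\ab_k,\bb_k,\cc_k)$ is an equilibrium point, which completes the plan.
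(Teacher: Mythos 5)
Your proposal follows the same skeleton as the paper's own proof---Gauss--Seidel block updates plus the Armijo rule \eqref{Armijo} give monotone non-increase, boundedness below gives convergence of the values, and then the vanishing decrements are converted into the equilibrium condition---but your version is substantially more careful, and in two places it repairs real weaknesses in the paper's argument. First, you derive the descent inequality $\nabla_{\bf a}F^{T}d_k\leq-\|d_k\|_2^2\leq 0$ from Lemma~\ref{Lem1}; the paper instead asserts $\nabla f({\bf x}_k)^T({\bf x}_{k+1}-{\bf x}_k)\geq 0$, which has the wrong sign (the projection argument gives the opposite inequality), so your step both corrects and justifies what the paper only gestures at. Second, and more importantly, the paper jumps from ``the decrements tend to zero'' to ``$\nabla f({\bf x}_k)=0$'' without excluding the possibility that the backtracking step sizes $\lambda_k$ themselves tend to zero; you identify this as the hard part and close it with the standard descent-lemma argument (Lipschitz gradient on a bounded region implies $\lambda_k\geq\beta(1-\alpha)/L$), at the honest price of an explicit boundedness or normalization assumption on the iterates that neither the theorem statement nor the paper's proof acknowledges. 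Your formulation of the conclusion---every accumulation point is an equilibrium point, via Lemma~\ref{Lemm}---is also the correct precise reading of the theorem's loose phrase ``reaches an equilibrium point.'' In short: same route, but your write-up is the rigorous version of it, and the extra assumption you introduce is genuinely needed for the argument to go through.
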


\begin{proof}
The monotonically non-increasing property of the sequence $\{f({\bf a}_k,{\bf b}_k,{\bf c}_k)\}_{k=1}^{\infty}$ is clear because of \eqref{Armijo} \cite{che2018nonnegative}. To be more precise, due to performing the backtracking method for each factor matrix in Algorithm \ref{ALG:TSVD_2}, we have
\begin{eqnarray}
\nonumber
f({\bf a}_{k+1},{\bf b}_{k+1},{\bf c}_{k+1})\leq f({\bf a}_{k},{\bf b}_{k+1},{\bf c}_{k+1})\leq f({\bf a}_{k},{\bf b}_{k},{\bf c}_{k+1})\leq  f({\bf a}_{k},{\bf b}_{k},{\bf c}_{k}).
\end{eqnarray}
The sequence $\{f({\bf a}_k,{\bf b}_k,{\bf c}_k)\}_{k=1}^{\infty}$ is monotonically non-increasing and bounded, so it is convergent. In view of \eqref{Armijo}, and considering $\nabla f({\bf x}_k)^T({\bf x}_{k+1}-{\bf x}_k)\geq 0$, we conclude that in the convergent case, we should have $\nabla f({\bf x}_k)=0$ and this completes the proof.
\end{proof}

It is proved in \cite{che2018nonnegative} that a stationary point which {in} our case is an equilibrium point of a DTPNN is a partial optimum. So, from this fact and also Theorem \ref{Thmconve}, we conclude that the DTPNN algorithm is convergent to a partial optimum.
Note that one can use a collection of DTPNN models so that they can collaborate with each other through the PSO method and we call this approach CNO-DTPNN. Combining results presented in Theorems \ref{thm_glob} and \ref{Thmconve}, we deduce that the CNO-DTPNN convergences to a global minimum with probability one. In the experimental results we see the CNO-DTPNN can provide relatively better results than the DTPNN method.
\section{Experimental Results}\label{Sec:Sim}
In this section, we present our experimental results. We have used \MATLAB to implement the algorithms on a PC computer with 2.60 GHz Intel(R) Core(TM) i7-5600U processor and 8GB memory. The parameters of the PSO are set as $\alpha=0.5,\,\beta_1=\beta_2=0.01$ and $\gamma_1={\rm rand}(1),\,\gamma_2={\rm rand}(1)$ where the MATLAB command ${\rm rand}(1)$ generates a random number in $(0,1)$ uniformly. The code is available from \url{https://github.com/vleplat/Neurodynamics-for-TD}.

The relative error is defined as follows
\[
{\rm Relative\,\,error}=\frac{\|\underline{\bf X}-\llbracket {\bf A}^{(1)}, {\bf A}^{(2)},{\bf A}^{(3)}\rrbracket\|_F}{\|\underline{\bf X}\|_F}.
\]

We have compared the proposed CNO-based algorithm with HALS \cite{cichocki2009fast,cichocki2007hierarchical,cichocki2009nonnegative}, MUR \cite{cichocki2009nonnegative}, CCG \cite{farias2014data}, CGP \cite{royer2011computing}, BFGSP \cite{royer2011computing}, GradP \cite{royer2011computing} algorithms. The implementation of many of nonnegative tensor decomposition algorithms can be found in \cite{Comontoolbox}.

EXAMPLE 1. ({\bf Decomposing difficult tensors}) In this experiment, we consider the challenging data tensors where the tensor rank exceeds the tensor dimension. To this end, we first generated a third-order tensor of size $9\times 9\times 9$ and the tensor rank $R=10$. Here, we examined the efficiency of a single continuous neurodynamic model \eqref{CON_1}-\eqref{CON_33} compared to the baseline methods for decomposing the given tensor. The relative errors of the considered algorithms are compared in Figure \ref{fig_exa_1}. The ANLS algorithm failed for many tests, but we reported the outcome of this method for a successful run. The results demonstrate that the ODE, which refers to the single continuous neurodynamic technique used in our study, exhibits superior performance compared to other techniques.
We also employed larger tensor ranks $R=11,12,\ldots,16$ to check the stability of the single continuous neurodynamic model for such more difficult cases. The results of these experiments are shown in Figure \ref{fig_exa_2} (left). As can be seen, decomposing data tensors with larger tensor ranks are more challenging but the single continuous neurodynamic model still provides satisfying results. We have compared the continuous neurodynamic with classical ODE and log barrier formulations in Figure \ref{fig_exa_2} (right). The numerical experiments show that the classical ODE formulation yields superior results compared to the log barrier formulation.

\begin{figure}
\begin{center}
\includegraphics[width=0.5\columnwidth]{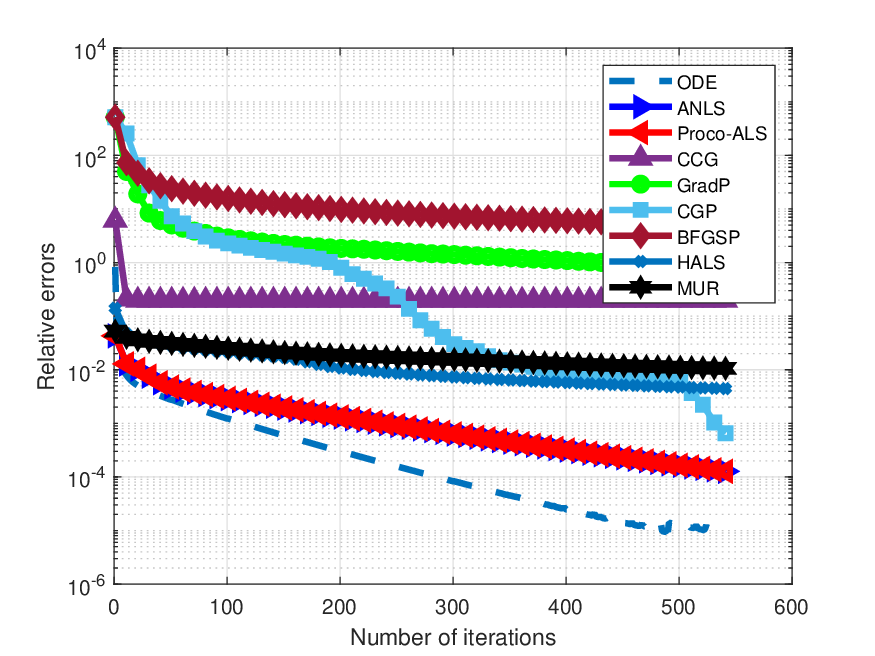}
\caption{\small{Comparing the algorithms for decomposing a nonnegative tensor of size $9\times 9\times 9$ and the tensor rank $R=10$.}}\label{fig_exa_1}
\end{center}
\end{figure}

Moreover, to further check the applicability of the single continuous neurodynamic for tensors of larger sizes, we consider a tensor of size $70\times 70\times 70$ and the tensor rank $R=75$. The relative errors of this simulation are reported in Figure \ref{fig_exa_3} (left). The BFGSP and Proco-ALS algorithms failed for this case. The results clearly show that the single continuous neurodynamic model is more efficient than the baseline methods and is also applicable for tensors of medium size. 

Finally, in order to investigate the efficiency of the discretized algorithm \ref{ALG:TSVD_2}, we decomposed a nonnegative tensor of size $9\times 9\times 9$ and the tensor rank $R=10$, using the different ideas such as the semi-implicit, full-explicit and cubic regularization methods. The results are shown in Figure \ref{fig_exa_3} (right). The results show that the semi-implicit method can provide better results than other techniques.

\begin{figure}
\begin{center}
\includegraphics[width=0.45\columnwidth]{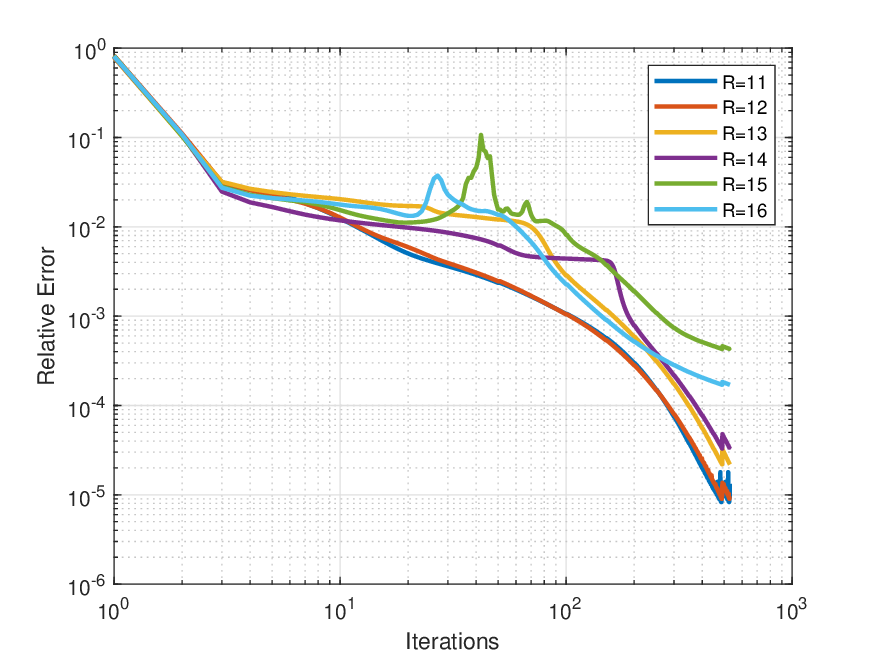}
\includegraphics[width=0.45\columnwidth]{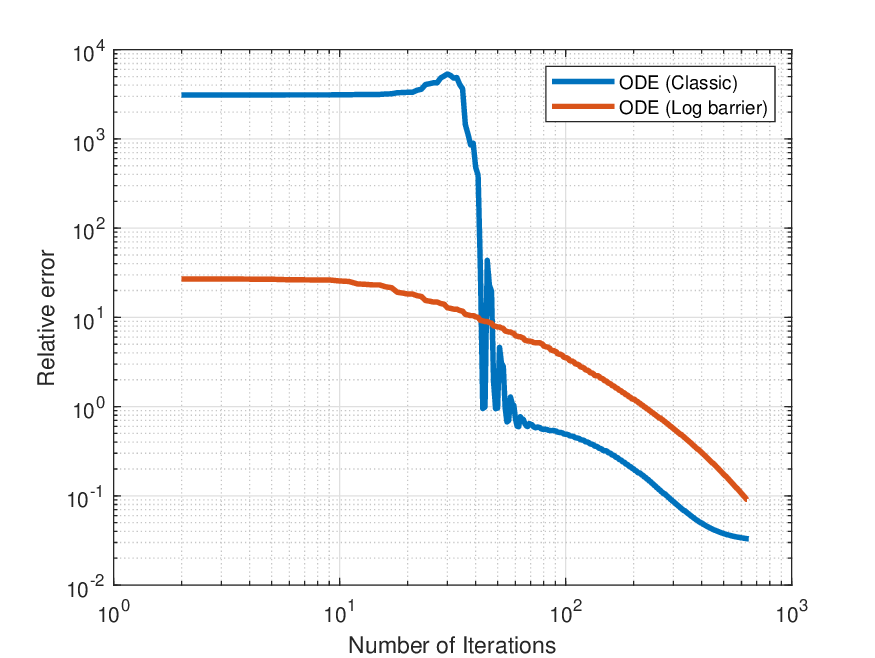}
\caption{\small{(left) The numerical results of the continuous neurodynamic for decomposing a nonnegative tensor of size $9\times 9\times 9$ and the tensor ranks $R=11,12,\ldots,16$. (right) Comparing the continuous neurodynamic and its log barrier formulation for decomposing a nonnegative tensor of size $9\times 9\times 9$ and the tensor rank $R=20$.}}\label{fig_exa_2}
\end{center}
\end{figure}

\begin{figure}
\begin{center}
\subfigure[]{\includegraphics[width=0.45\columnwidth]{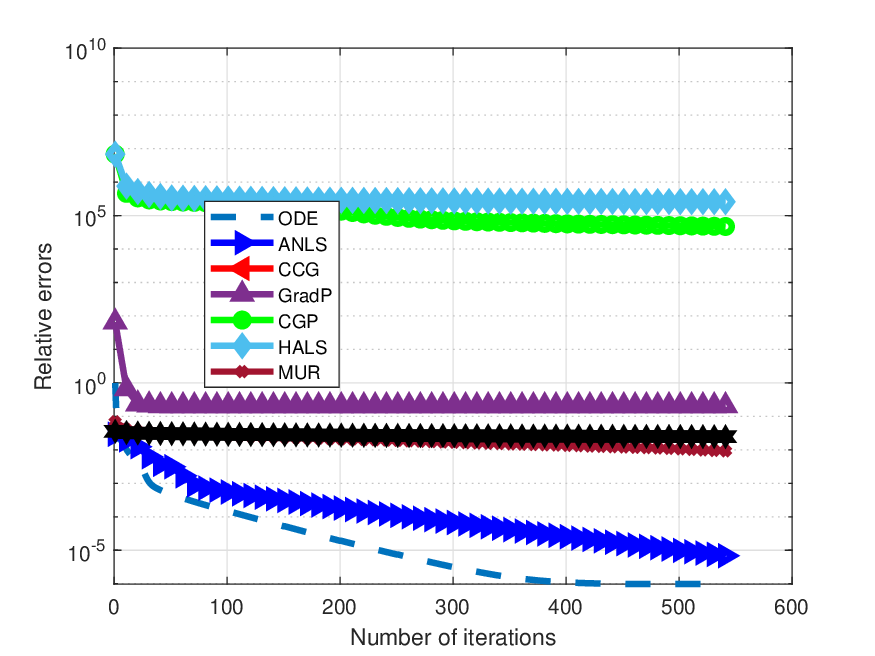}}
\subfigure[]{\includegraphics[width=0.45\columnwidth]{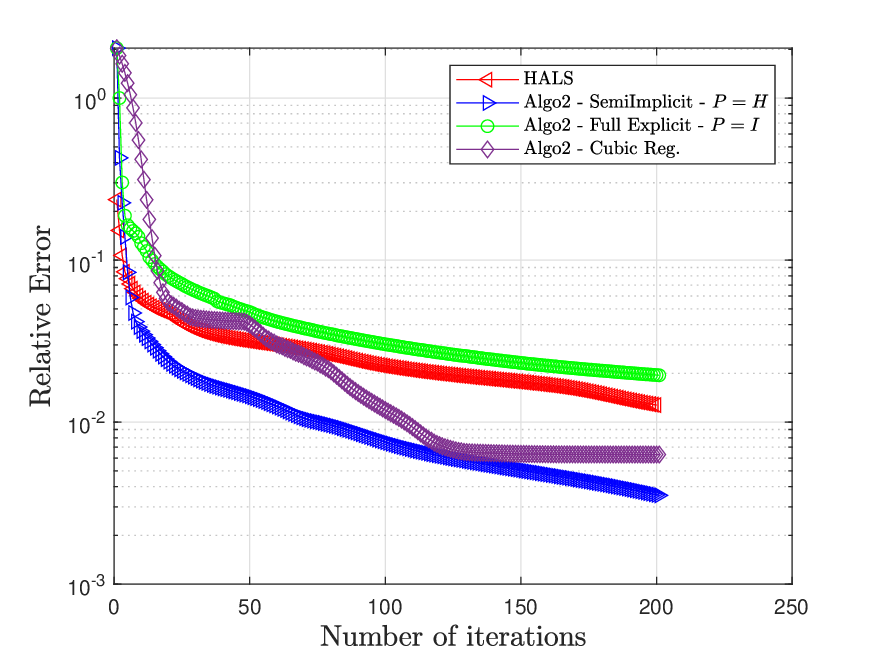}}
\caption{\small{Comparing the algorithms for decomposing a nonnegative tensor of size $70\times 70\times 70$ and the tensor rank $R=75$. The BFGSP and Proco-ALS algorithms failed. (right). {The numerical results of the discrete neurodynamic for decomposing a nonnegative tensor of size $9\times 9\times 9$ and the tensor rank $R=10$.}}}\label{fig_exa_3}
\end{center}
\end{figure}


EXAMPLE 2. ({\bf Decomposing tensors with high colinear factor matrices}) In this example, we apply our neurodynamic model on random data tensors with high colinear factor matrices. In particular, we demonstrate the impact of the PSO in getting better results. We first create three matrices ${\bf A}^{(i)}\in\mathbb{R}^{20\times 10},\,i=1,2,3$ whose elements independent and identically distributed uniform random variables and build a 3rd order tensor $\underline{\bf X}\in\mathbb{R}^{20\times 20\times 20}$. We applied the proposed collaborative neurodynamic Algorithm \ref{ALG:TSVDP} to compute a nonnegative CPD with tensor rank $R=10$. We mainly consider the following two scenarios:

\begin{itemize}
      \item {\bf Case study I.} Random data with one high colinear factor matrix.\\
      
        \item {\bf Case study II.} Random data with two  high colinear factor matrices.
\end{itemize}

{
We remind that the collinearity in general (if the column matrices are not normalized to length one) is defined as
\[
\mu = \frac{{{\mathbf a}_m^{\left( r \right)\,T}{\mathbf a}_n^{\left( r \right)}}}{{{{\left\| {{\mathbf a}_m^{\left( r \right)}} \right\|}_2}{{\left\| {{\mathbf a}_n^{\left( r \right)}} \right\|}_2}}},\,\,\,m \ne n
\]
where ${\mathbf a}_m^{\left( r \right)}$ and ${\mathbf a}_n^{\left( r \right)}$ are two columns of the same factor matrix \cite{phan2013low}. 
}
In this example, we have used our CNO-CPD Algorithm with population size $q=5$ in our computations. For Case I, we assume that the third factor matrix $({\bf A}^{(3)})$ is highly colinear $0.96\leq\mu\leq 0.99$ while two other factor matrices (${\bf A}^{(2)},\,{\bf A}^{(3)}$) have collinearity $0.4\leq\mu\leq 0.6$. Here we applied the baseline algorithms and the proposed neurodynamic model to the mentioned data tensor. The relative errors are reported in Figure \ref{Fig_11} (left). 
As can be seen the neurodynamic model has better performance and in a few iterations provide better results than the baseline algorithms. Figure \ref{Fig_11} (left) shows that we have achieved stable solutions. 

For Case II, we assume that the second and third factor matrices (${\bf A}^{(2)},\,{\bf A}^{(3)}$) have collinearity $0.96\leq\mu\leq 0.99$ and the first factor matrix (${\bf A}^{(1)}$) has colinearity $0.4\leq\mu\leq 0.6$. Here again, the baseline algorithms and the proposed neurodynamic model are applied to this new data tensor. The relative errors of them are reported in Figure \ref{Fig_11} (right). 
%
We performed Monte Carlo experiments using different numbers of population sizes $q=5,\,10,\,15,\,20,\,25,\,30$. These results are displayed in Figure \ref{Figmonte} (left) for the data tensor Case I and Figure \ref{Figmonte} (right) for the data tensor Case II. In view of Figure \ref{Figmonte}, we see that for larger population sizes we achieve a lower relative error.

{To ensure fairness in the comparison and to highlight the significance of using a stable ODE solver in the formulation, we also equipped the best baseline algorithm which was the HALS algorithm with PSO re-initialization. The new HALS-PSO algorithm still had a very slow convergence and almost similar to the original HALS algorithm. While the proposed algorithm generally requires more time due to re-initialization and ODE solver, however for such difficult scenarios where too many iterations are required for the convergence of the baseline algorithms, the CNO-CPD requires less time for the convergence.
The per iteration complexity of the proposed collaborative algorithm is higher than the baselines due to re-initialization process and ODE solver. However, as we see for tensors with high co-linearity, most of the baselines have very slow convergence and need too many iterations for the convergence. In our simulation, it took the CNO-CPD 20 (sec) to get an approximation with $10^{-4}$ accuracy while in 100 (sec) the best baseline (HALS) or HALS with PSO re-initialisation just achieved an approximation with  accuracy $10^{-1}$. These results convinced us that the proposed algorithm can also be beneficial in terms of running time compared to the baselines. Indeed, the proposed algorithm provides more stable solutions in a reasonable amount of computing time. 
}

\begin{figure*}
\begin{center}
\includegraphics[width=0.45\columnwidth]{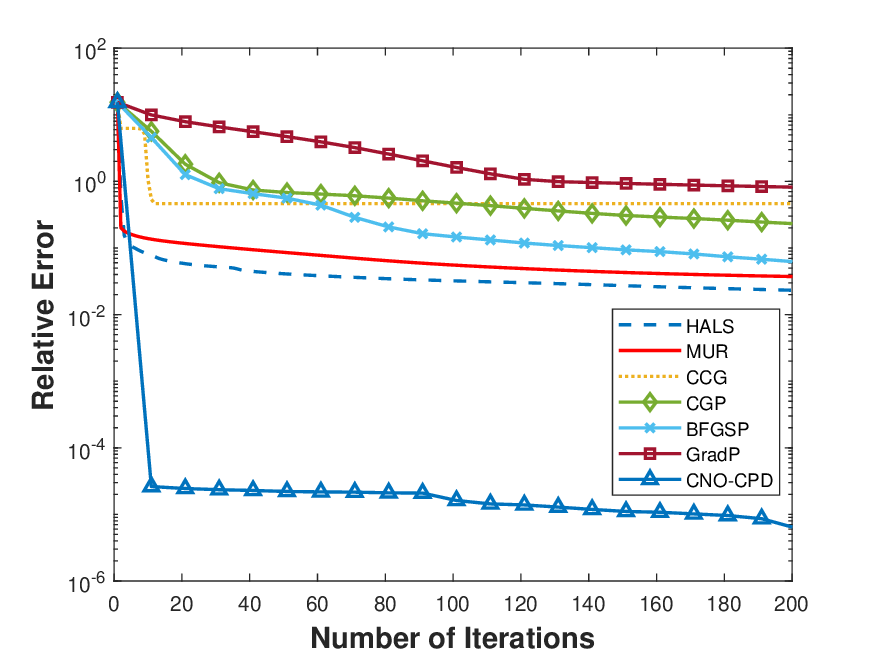}
\includegraphics[width=0.45\columnwidth]{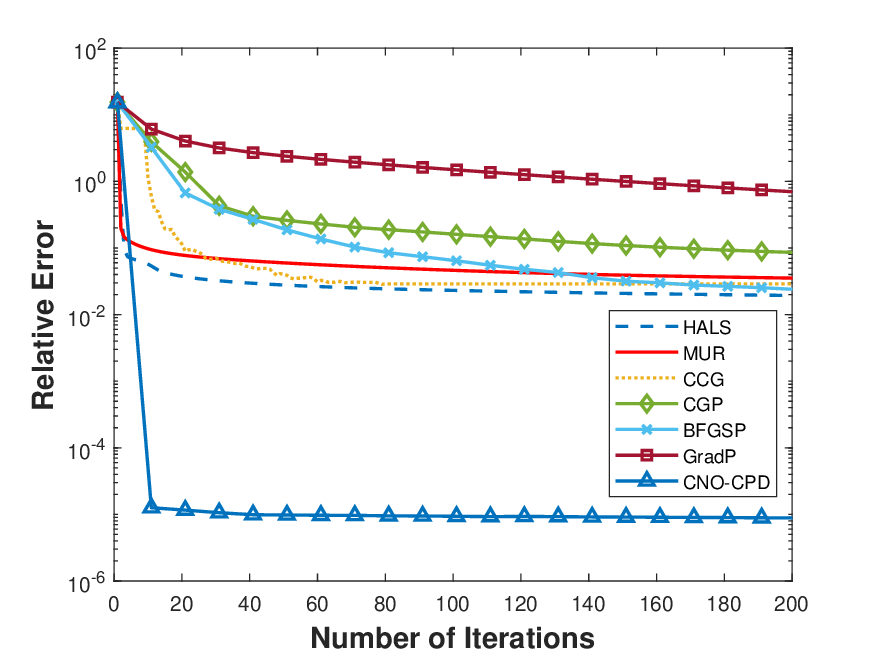}
\caption{\small{(Left) The relative error comparison of the proposed algorithm and baseline algorithms for the random data tensor Case study  I ($q=30$). (Right) The relative error comparison of the proposed algorithm and baseline algorithms for the random data tensor Case study II ($q=30$).}}\label{Fig_11}
\end{center}
\end{figure*}

\begin{figure*}\label{Figmonte}
\begin{center}
\includegraphics[width=0.45\columnwidth]{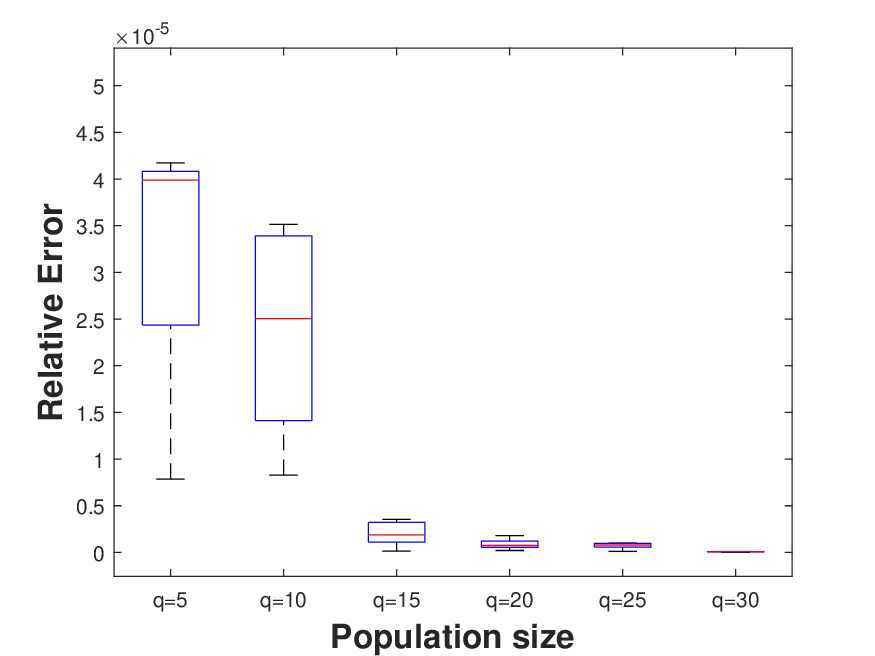}
\includegraphics[width=0.45\columnwidth]{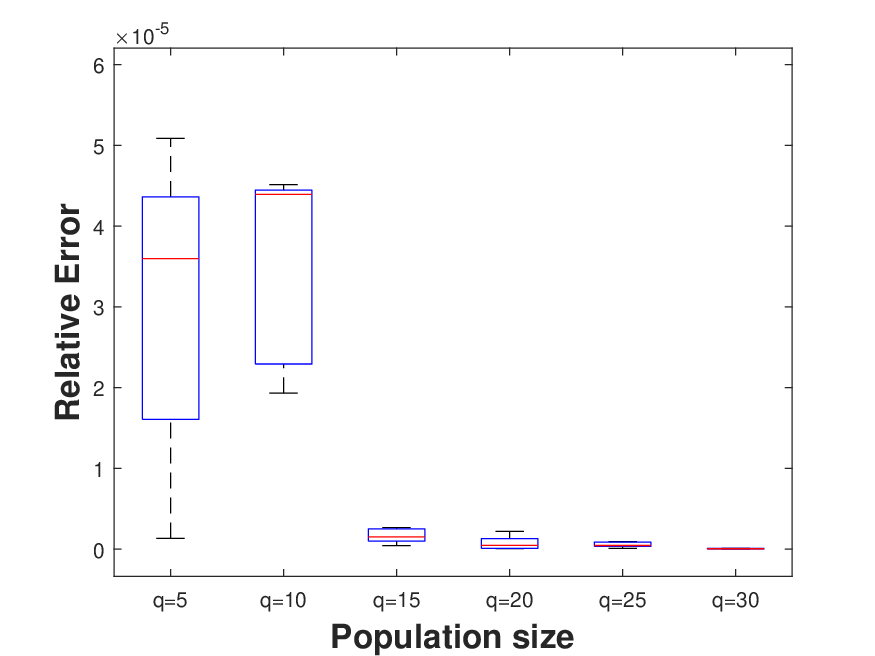}
\caption{\small{Monte Carlo experimental results for the CNO-based algorithm for different population sizes. (Left) The relative error comparison of the proposed algorithm for random data tensor Case study  I. (Right) The relative error comparison of the proposed algorithm for random data tensor Case II.}}\label{Fig_box}
\end{center}
\end{figure*}

EXAMPLE 3. ({\bf Real-world data sets}) 

In this experiment, we use several real-world data tensors to show the efficiency of the proposed CNO-CPD algorithm. We main consider ``{\it COIL20}''\footnote{\url{https://www.cs.columbia.edu/CAVE/software/softlib/coil-20.php}}, ``{\it YALE}''\footnote{\url{http://cvc.cs.yale.edu/cvc/projects/yalefaces/yalefaces.html}}, and ``{\it ORL}''\footnote{\url{https://cam-orl.co.uk/facedatabase.html}}
datasets in our simulations. 
A detailed information about the mentioned datasets are described below:

1- {\bf COIL20 dataset.} It is a collection of gray-scale images of 20 different objects under 72 different rotations. The size of each image is $32\times 32$ and naturally builds a fourth-order tensor of size $32\times 32\times 20\times 72$. We reshape the data tensor to a 3rd order tensor of size $32\times 32 \times 1440$.\\

2- {\bf YALE dataset.} It is a collection of gray scale images of 15 persons under 11 different expressions such as sad, sleepy, surprised, and wink. Each image is of size $32\times 32$ and the data can be represented as a fourth-order tensor of size $32\times 32\times 15\times 11$. We reshape it to a third-order tensor of size $32\times 32\times 165.$ \\

3- {\bf ORL dataset.} It contains gray-scale images of 10 persons for 40 distinct subjects such as facial expressions (open/closed eyes, smiling/not smiling) and facial details (glasses/no glasses). The size of each is $32\times 32$ and whole data is represented as a fourth-order tensor of size $32\times 32\times 10\times 40$. We reshape the data to a third-order tensor of size $32\times 32\times 400$.\\

We used tensor rank $R=10$ for all data tensors to represent them in the CPD format. {In this experiment, the data tensors are large. So, computing the stable solutions of the continuous neurodynamic is prohibitive because we usually need a large time-step for finding the stable solutions.} To resolve this problem, we have used discrete neurodynamic with a population size of 5. The relative errors of the algorithms for the COIL20, the ORL and the YALE datasets are reported in Figures \ref{COIL20}, respectively. 
From Figure \ref{COIL20}, we see that the CNO-based algorithm can achieve lower objective values. This clearly demonstrates the superiority of the CNO-based algorithms over the baseline methods. 

{It is known that
the factor matrices of the nonnegative CPD can be used for clustering multi-dimensional
datasets \cite{zhou2014decomposition}. This can be considered as an interpretation of the data via nonnegative CPD.
More precisely, the factor matrices obtained via nonnegative CPD extract important
features of the underlying datasets after which the clustering methods can be applied
to them to effectively partition the datasets. Here, we employed the t-distributed stochastic neighbor embedding method \cite{van2008visualizing}. Applying the t-SNE directly to the matricized version of the data is quite expensive and this is why we apply it to the last factor matrix as a matrix extracting important features of the dataset. Indeed, the t-SNE  applying on the last factor matrix returns a matrix of two-dimensional embeddings of the high-dimensional rows of the factor matrix. The clustering results are visualized 
in Figure \ref{Clustering}.}

\begin{figure*}
\begin{center}
\includegraphics[width=0.45\columnwidth]{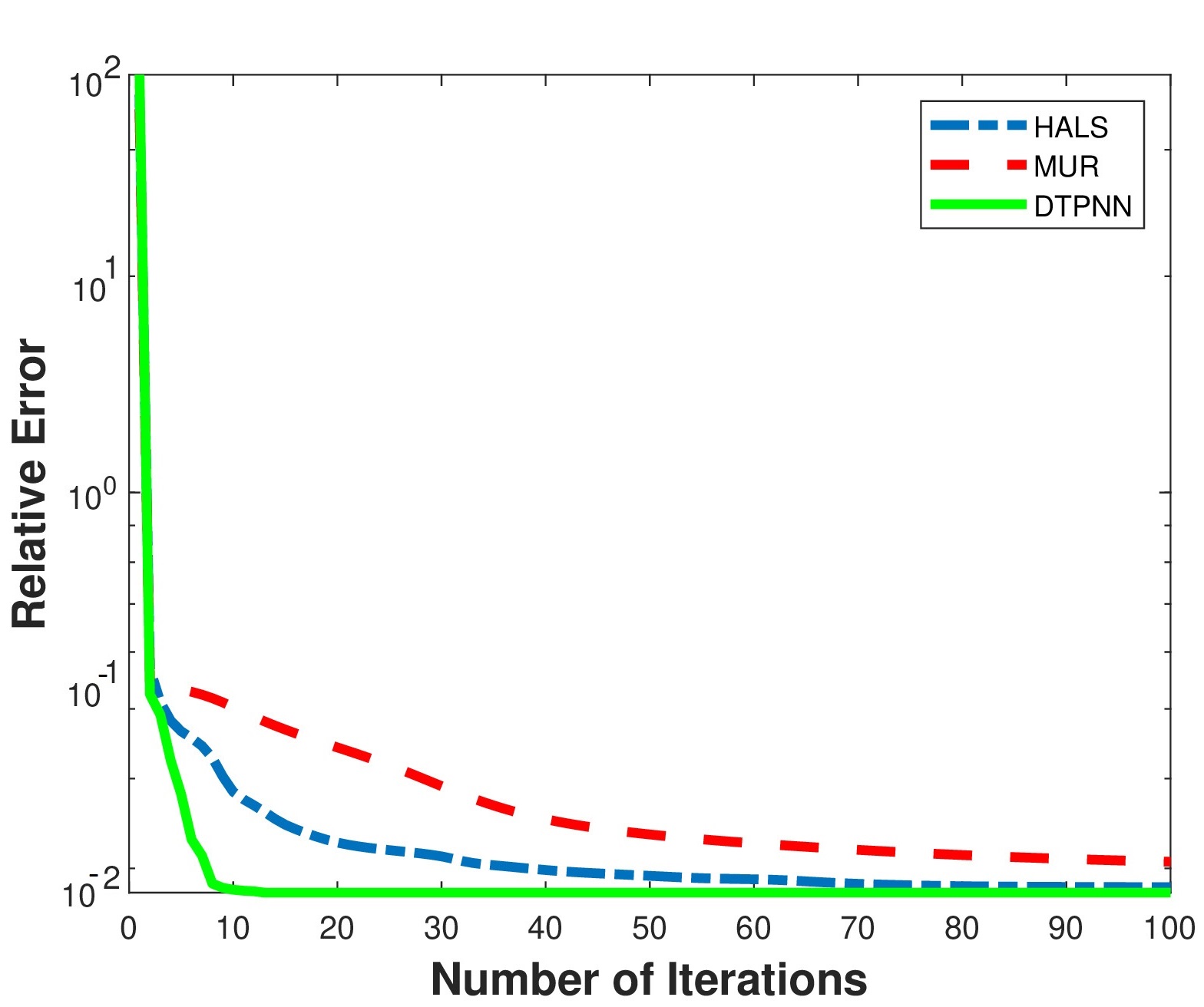}
\includegraphics[width=0.45\columnwidth]{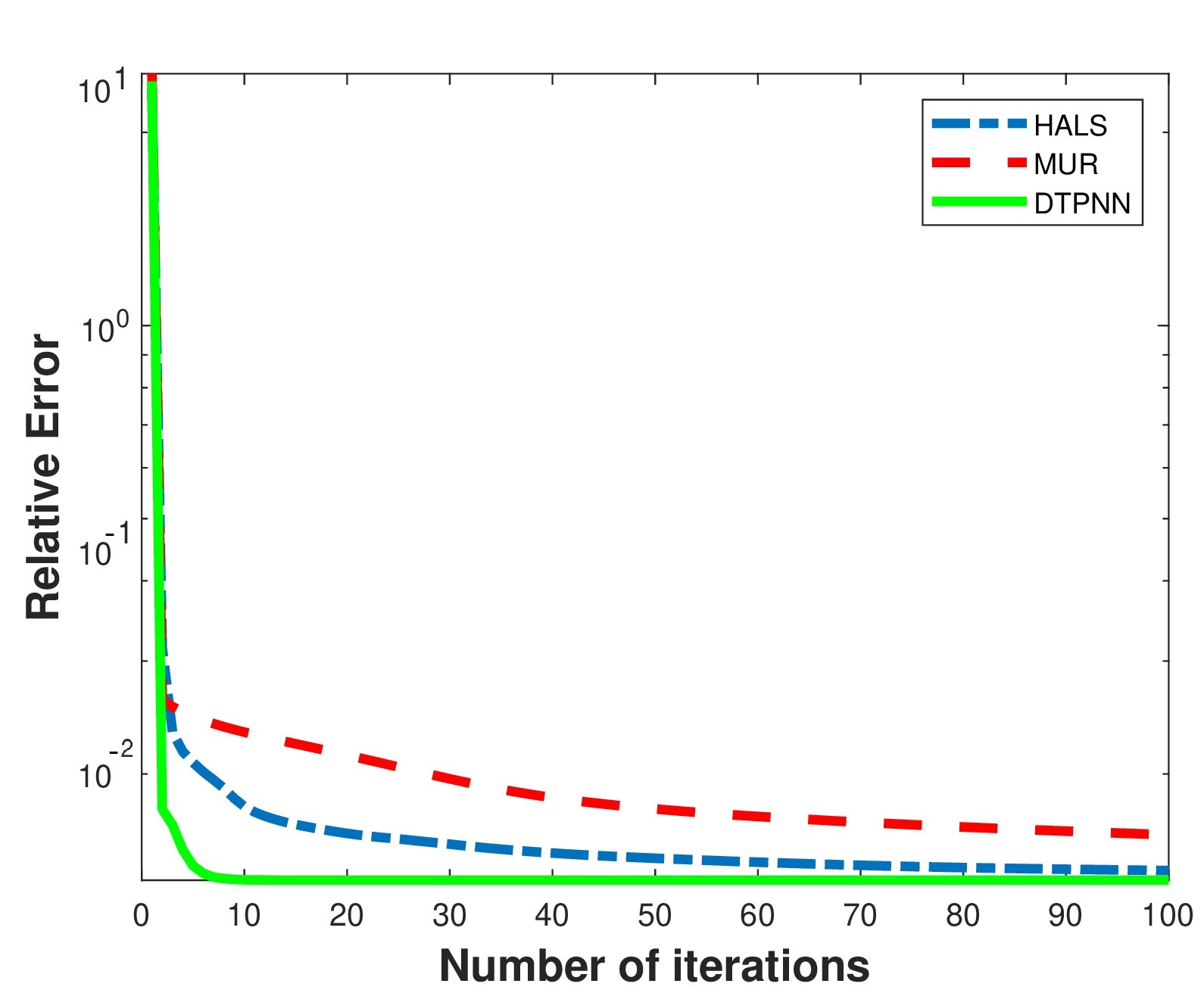}\\
\includegraphics[width=0.45\columnwidth]{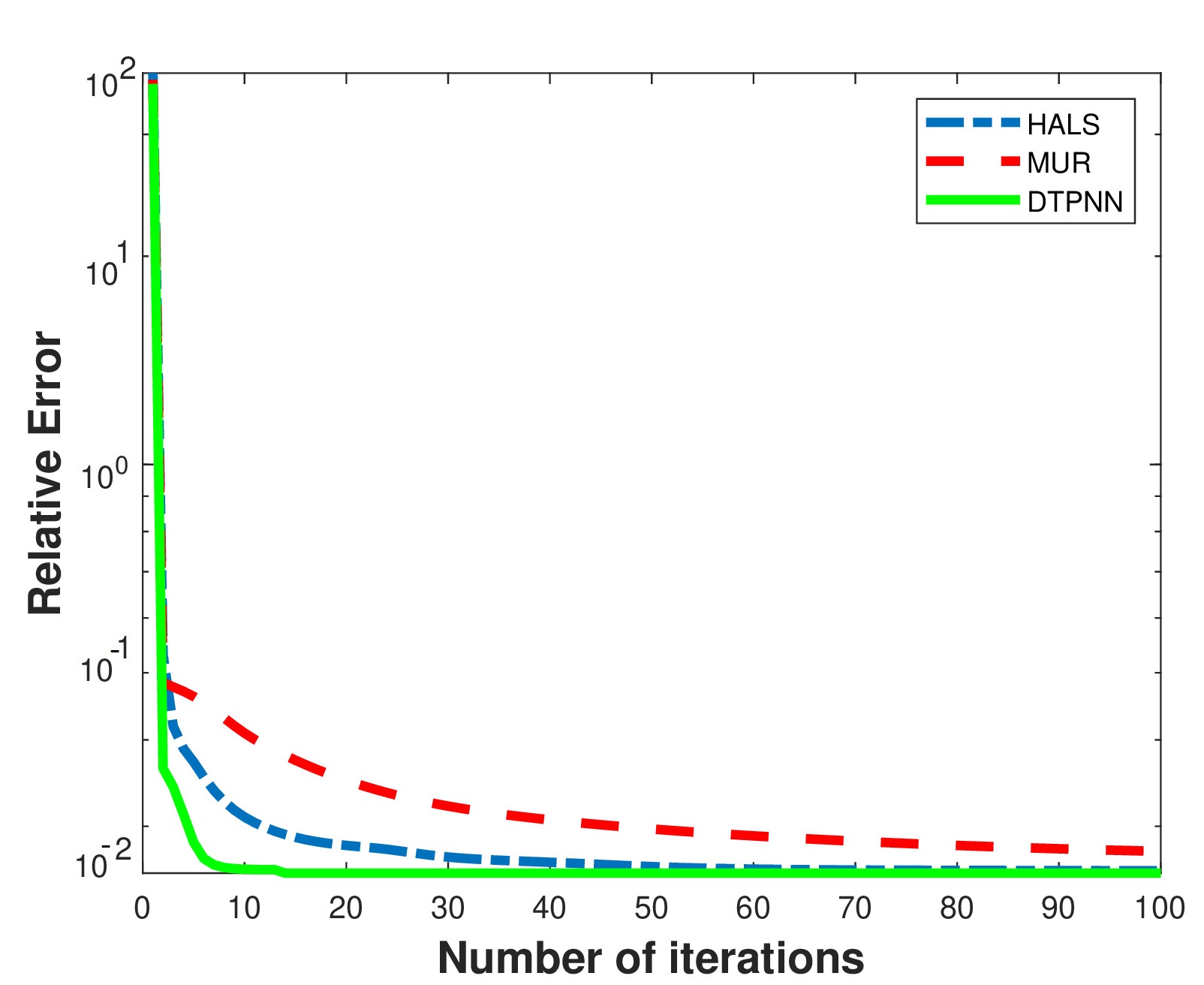}
\caption{\small{(Top Left) The relative error comparison of the proposed algorithm and baseline algorithms for the COIL20 dataset. (Top Right) The relative error comparison of the proposed algorithm and baseline algorithms for the ORL dataset. (Bottom) The relative error comparison of the proposed algorithm and baseline algorithms for the YALE dataset.}}\label{COIL20}
\end{center}
\end{figure*}

\begin{figure}
\begin{center}
\includegraphics[width=.45\columnwidth]{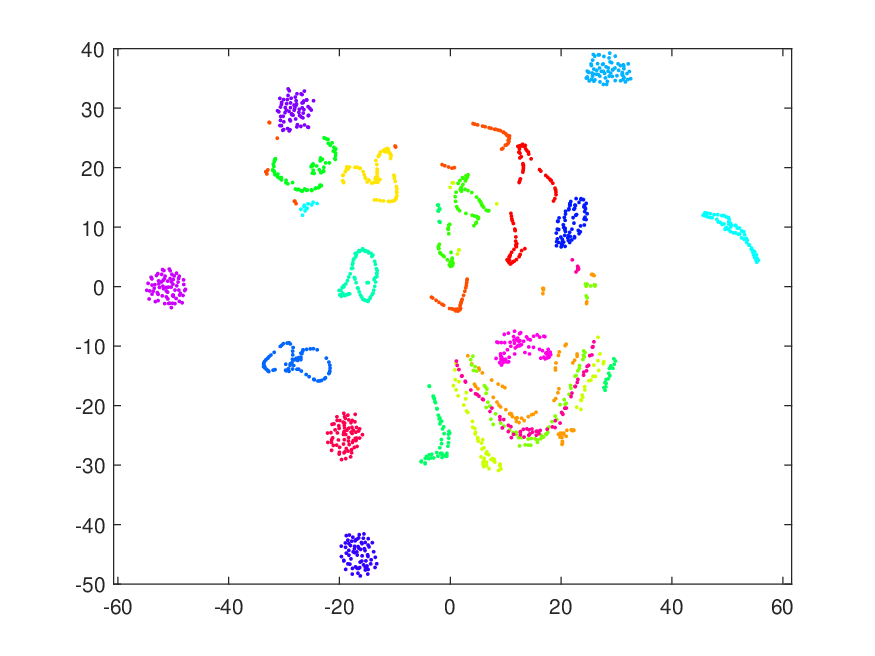}
\includegraphics[width=.45\columnwidth]{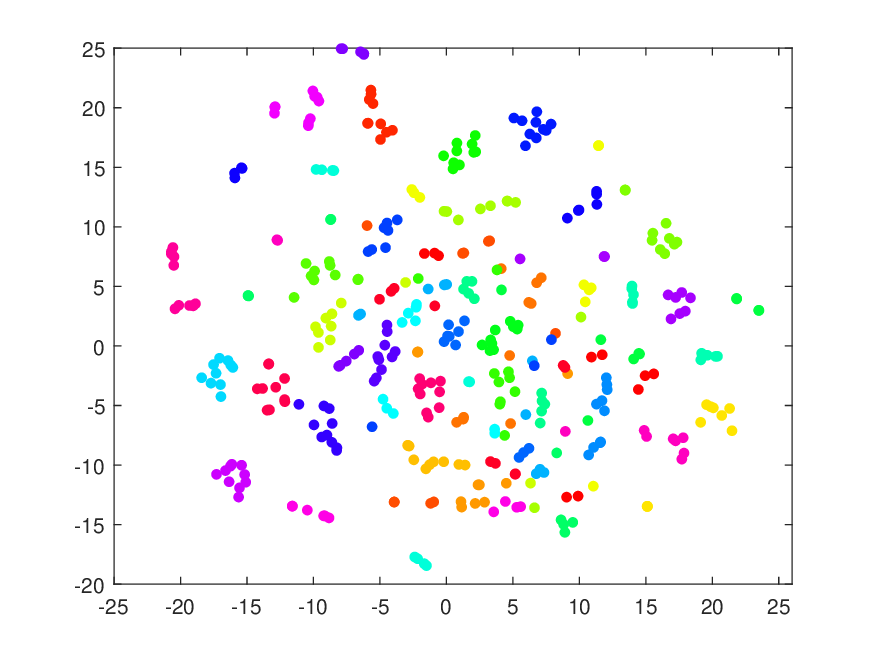}
 \includegraphics[width=.45\columnwidth]{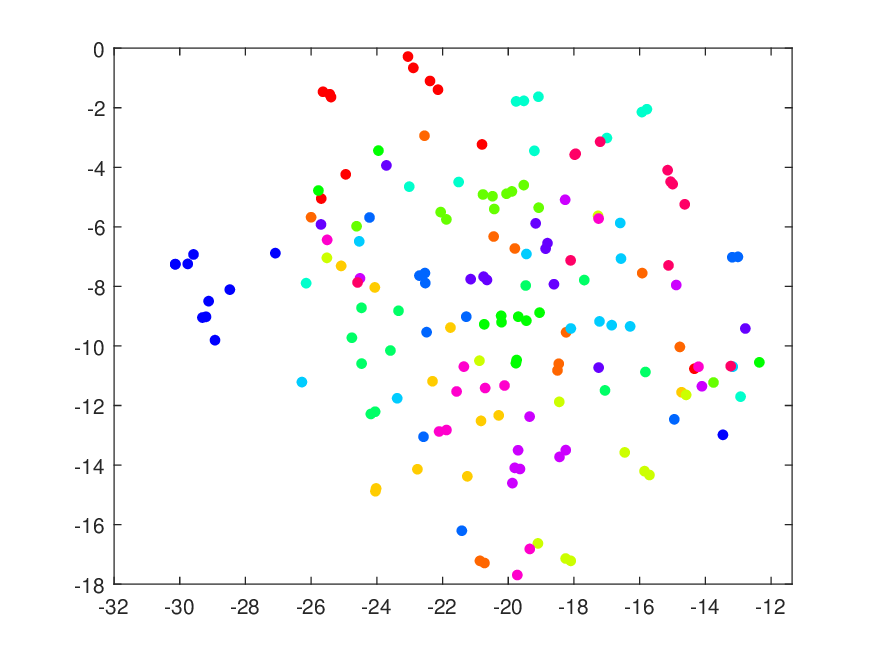}
\caption{\small{Clustering results obtained by the proposed algorithm for (Top left) COIL20 dataset, (Top right) ORL dataset and (bottom) YALE dataset.}}\label{Clustering}
\end{center}
\end{figure}


EXAMPLE 4. ({\bf Hyperspectral Images data sets}) 
We test the proposed CNO-CPD algorithms for hyperspectral unmixing~\cite{fevotte2014nonlinear}. In this case, the continuous neurodynamic with classical ODE (PSO deactivated) is the workhorse approach, and we compare it to MUR and HALS methods: Figure~\ref{fig:HSI} 
provide the median evolution of the relative errors values for:
\begin{itemize}
    \item the Cuprite data set\footnote{\label{note1}Available for download at \url{http://lesun.weebly.com/hyperspectral-data-set.html}.}: third-order tensor of size $120 \times 120 \times 180$ containing mostly 12 types of minerals ($R=12$).
    \item the Urban data set\footref{note1}: third-order tensor of size $307 \times 307 \times 162$ containing mostly 6 materials ($R=6$), namely "Road", "Soil", "Tree", "Grass", "Roof top 1" and "Roof top 2". We extract a subtensor of size $120 \times 120 \times 162$.
    \item the Jasper Ridge data set\footref{note1}: third-order tensor of size $100 \times 100 \times 198$ containing mostly 4 materials ($R=4$), namely "Road", "Soil", "Water" and "Tree".
\end{itemize}
One can observe that the CNO-based algorithm achieves lower objective values and demonstrates faster convergence per iteration compared to its competitors.

\begin{figure}
\begin{center}
\includegraphics[width=0.45\columnwidth]{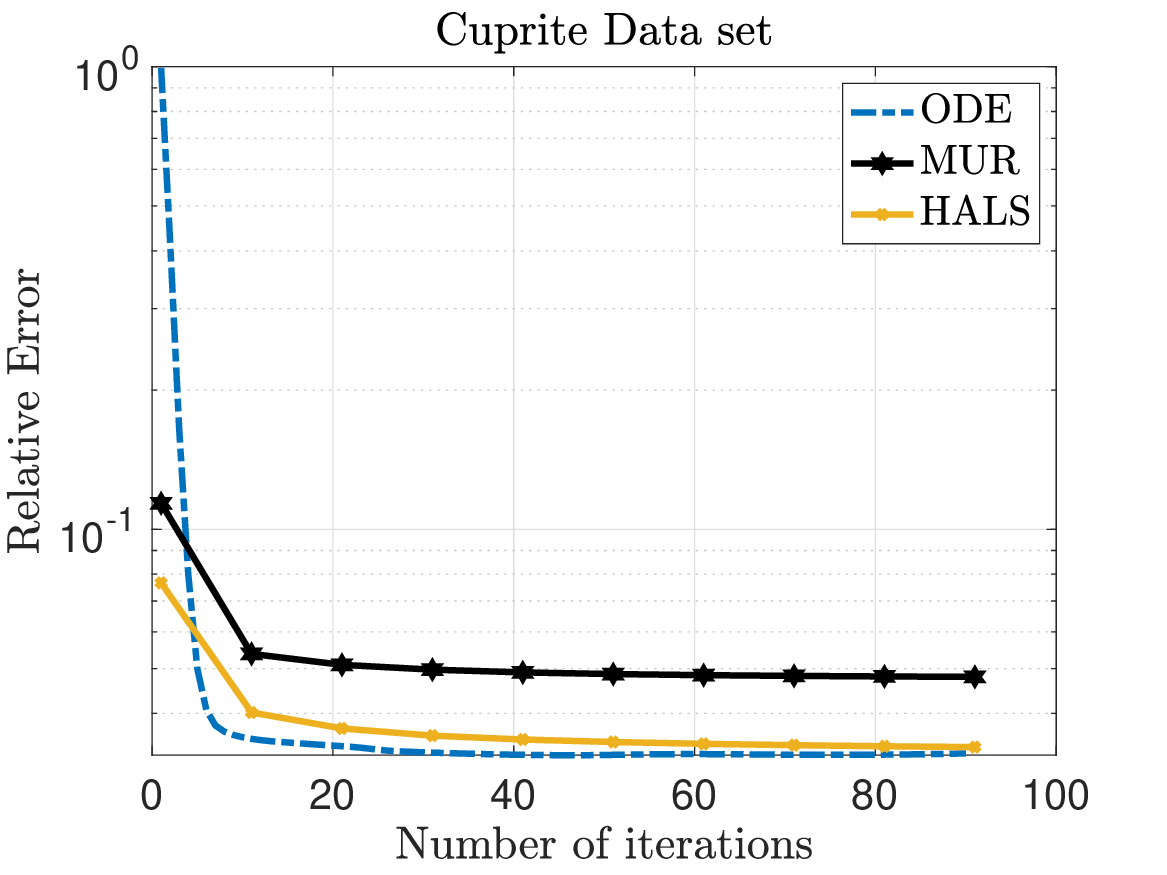}
\includegraphics[width=0.45\columnwidth]{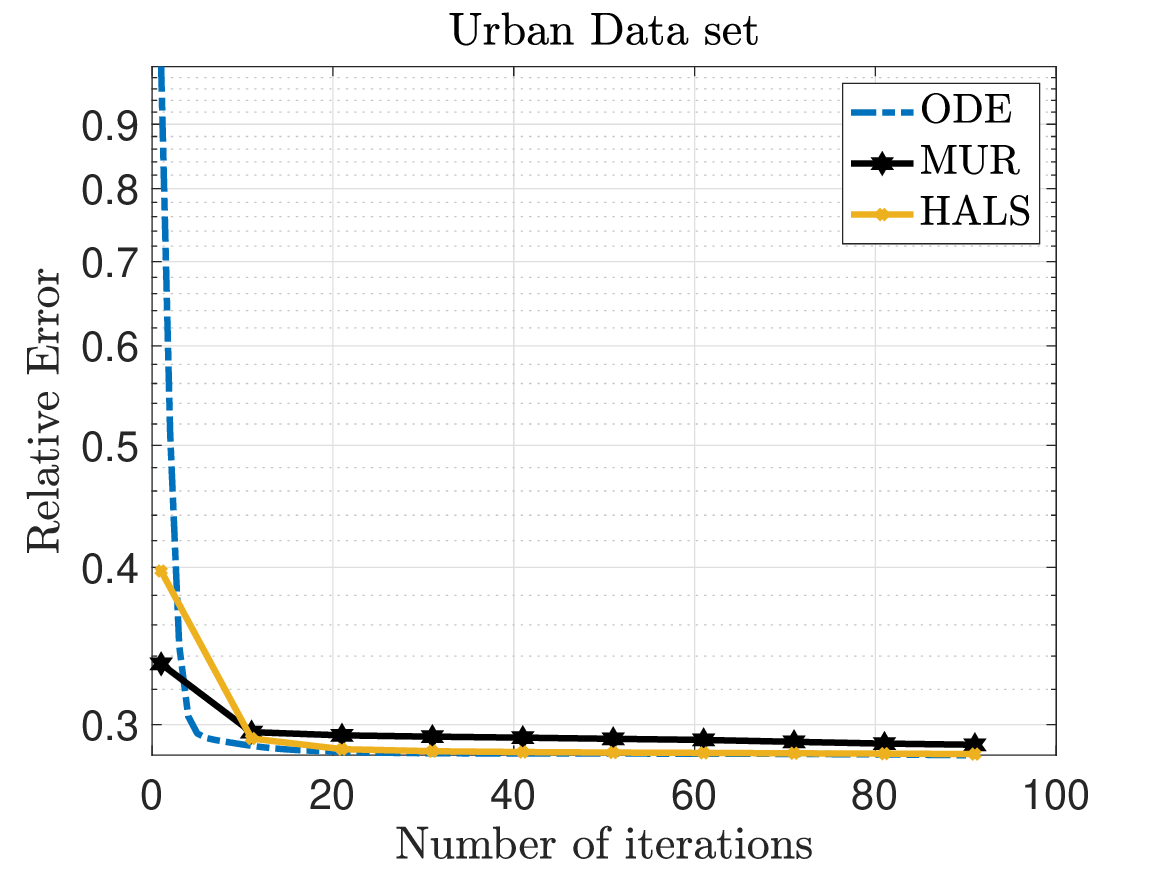}\\
\includegraphics[width=0.45\columnwidth]{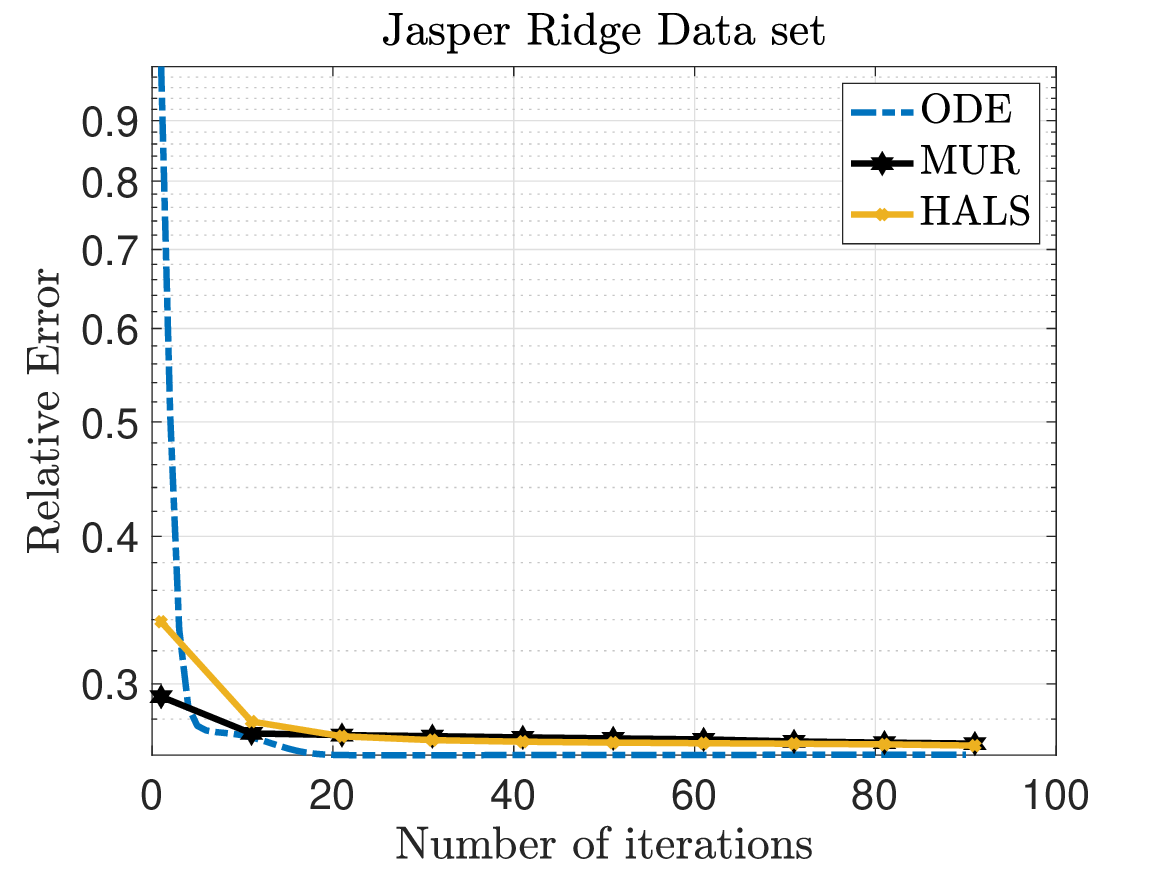}
\caption{\small{ Evolution of the median relative errors found among the 10 random  initializations, of the benchmarked algorithms:
(Top Left) for the Cuprite dataset, (Top Right) for the Urban data set, and (Bottom) for the Jasper Ridge dataset.}}\label{fig:HSI}
\end{center}
\end{figure}

\section{Conclusion}\label{Sec:Con}
In this paper, we have formulated the nonnegative Canonical Polyadic Decomposition (CPD) problem as a Collaborative neurodynamic Optimization (CNO) problem. {Multiple recurrent neural networks communicate with each other using the particle swarm optimization (PSO) method to identify the best candidate, leading to a potential global minimum.} The experiments conducted involve both random and real-world datasets. The results obtained have confirmed the effectiveness of the proposed approach. However, when dealing with large data tensors, obtaining the stable solution of the dynamic system \ref{CON_1} may require a significant time-step, making it computationally expensive. To overcome this challenge, one possible solution is to divide the time-step into smaller intervals and integrate the solutions obtained from these smaller time-steps in parallel. This approach is an area of interest for future research. Additionally, our future work aims to develop collaborative neurodynamic models for other tensor decompositions, such as Tucker decomposition, Tensor Train, Tensor Ring, block term decomposition, and their constrained variants. We also plan to explore nonnegative tensor factorization with Kullback-Leibler (K-L) divergence and Alpha-Beta divergences.

\section{Acknowledgement} The authors would like to thank the editor and two reviewers for their constructive comments and suggestions that allowed us to significantly improve the quality of the paper.

\bibliographystyle{siamplain}
\bibliography{references}

\section{appendix}
{\bf Proof of Theorem \ref{Lem2}.} To prove this theorem, we should consider the following three cases: 
\begin{itemize}
    \item Case I. If $l_i\leq{\bf q}_{k,{\bf a}}^i\leq u_i,\,l_i\leq{\bf q}_{k,{\bf b}}^i\leq u_i$ and $l_i\leq{\bf q}_{k,{\bf c}}^i\leq u_i$, then since $P_{\Omega}({\bf q}_{k,{\bf a}}^i)={\bf q}_{k,{\bf a}}^i,\,P_{\Omega}({\bf q}_{k,{\bf b}}^i)={\bf q}_{k,{\bf b}}^i,\,P_{\Omega}({\bf q}_{k,{\bf c}}^i)={\bf q}_{k,{\bf c}}^i$, we have
\begin{eqnarray}\label{CaseI}
\left\{\begin{aligned}
\ab^{i}_{k+1}&=&\ab^{i}_k-\gamma_k(\nabla_{\bf a} f)^{i},\\
\bb^{i}_{k+1}&=&\bb^{i}_k-\gamma_k(\nabla_{\bf b} f)^{i},\\
\cc^{i}_{k+1}&=&\cc^{i}_k-\gamma_k(\nabla_{\bf c} f)^{i},
\end{aligned}\right.
\end{eqnarray} 
and this proves the first part. In view of \eqref{CaseI}, the DTPNN reaches its equilibrium points iff $\nabla_{\tau}f=0,\,\tau={\bf a},\,{\bf b},\,{\bf c}.$

    \item  Case II. If ${\bf q}_{k,{\bf a}}^i>u_i,\,{\bf q}_{k,{\bf b}}^i>u_i,\,{\bf q}_{k,{\bf c}}^i>u_i$, then since $l_i\leq{\bf a}_k^i,{\bf b}_k^i,{\bf c}_k^i\leq u_i,$ we have
    $\nabla_{\bf a}f< 0,\,\nabla_{\bf b}f< 0,\,\nabla_{\bf c}f< 0,$
    and
    \begin{eqnarray}\label{C}
\left\{\begin{aligned}
\ab_{k+1}^{i}&=&\ab_k^{i}+\lambda_k(-\ab^{i}_k+u_i)\geq 0,\\
\bb^{i}_{k+1}&=&\bb^{i}_k+\lambda_k(-\bb^{i}_k+u_i)\geq 0,\\
\cc^{i}_{k+1}&=&\cc^{i}_k+\lambda_k(-\cc^{i}_k+u_i)\geq 0.\end{aligned}\right.
\end{eqnarray} 
So, we can consider a function $\alpha_i({\bf \tau}_k)\geq 0,\,\tau={\bf a},\,{\bf b},\,{\bf c}$ such that    
 \begin{eqnarray}\label{C_2}
\left\{\begin{aligned}
\ab_{k+1}^{i}&=&\ab_k^{i}+\alpha_i({\bf a}_k)\geq 0,\\
\bb^{i}_{k+1}&=&\bb^{i}_k+\alpha_i({\bf b}_k)\geq 0,\\
\cc^{i}_{k+1}&=&\cc^{i}_k+\alpha_i({\bf c}_k)\geq 0.\end{aligned}\right.
\end{eqnarray}    
and this also proves the second part. In view of \eqref{C_2}, the DTPNN reaches its equilibrium points iff $\alpha_i({\bf c}_k)$.

    \item Case III. If ${\bf q}_{k,{\bf a}}^i<l_i,\,{\bf q}_{k,{\bf b}}^i<l_i,\,{\bf q}_{k,{\bf c}}^i<l_i$, then since $l_i\leq{\bf a}_k^i,{\bf b}_k^i,{\bf c}_k^i\leq u_i,$ we have
    $\nabla_{\bf a}f> 0,\,\nabla_{\bf b}f> 0,\,\nabla_{\bf c}f> 0,$
    and
    \begin{eqnarray}
\left\{\begin{aligned}
\ab_{k+1}^{i}&=&\ab_k^{i}+\lambda_k(-\ab^{i}_k+l_i)\leq 0,\\
\bb^{i}_{k+1}&=&\bb^{i}_k+\lambda_k(-\bb^{i}_k+l_i)\leq 0,\\
\cc^{i}_{k+1}&=&\cc^{i}_k+\lambda_k(-\cc^{i}_k+l_i)\leq 0.\end{aligned}\right.
\end{eqnarray} 
So, we can consider a function $\beta_i({\bf \tau}_k)\geq 0,\,\tau={\bf a},\,{\bf b},\,{\bf c}$ such that    
 \begin{eqnarray}\label{C_3}
\left\{\begin{aligned}
\ab_{k+1}^{i}&=&\ab_k^{i}+\beta_i({\bf a}_k)\leq 0,\\
\bb^{i}_{k+1}&=&\bb^{i}_k+\beta_i({\bf b}_k)\leq 0,\\
\cc^{i}_{k+1}&=&\cc^{i}_k+\beta_i({\bf c}_k)\leq 0.\end{aligned}\right.
\end{eqnarray}    
and this also proves the third part. In view of \eqref{C_3}, the DTPNN reaches its equilibrium points iff $\alpha_i({\bf c}_k)=0$.  
\end{itemize}
{\bf Proof of Theorem \ref{Thm_pro}.}
The proof of this theorem is quite similar to the ones given in \cite{che2018nonnegative} and \cite{li2020discrete}, but for the paper to be self-contained, we present them. Let us consider 
\begin{eqnarray}\label{Fomu_1}
\nonumber
\begin{bmatrix}
{\bf a}_{k+1}\\
{\bf b}_{k+1}\\
{\bf c}_{k+1}
\end{bmatrix}
-
\begin{bmatrix}
\widehat{{\bf a}}\\
\widehat{{\bf b}}\\
\widehat{{\bf c}}
\end{bmatrix}
&=&\begin{bmatrix}
{\bf a}_{k}
\\{\bf b}_{k}
\\{\bf c}_{k}
\end{bmatrix}
-
\begin{bmatrix}
\widehat{{\bf a}}\\
\widehat{{\bf b}}\\
\widehat{{\bf c}}
\end{bmatrix}\\
&+&\lambda_k
\begin{bmatrix}
-{\bf a}_{k}+[{\bf a}_{k}-\nabla_{{\bf a}}f]_{+}
\\-{\bf b}_{k}+[{\bf b}_{k}-\nabla_{{\bf b}}f]_{+}
-{\bf c}_{k}+[{\bf c}_{k}-\nabla_{{\bf c}}f]_{+}
\end{bmatrix},
\end{eqnarray}
and take the Euclidean norm on both sides of \eqref{Fomu_1} as
\begin{eqnarray}
\norm{\begin{bmatrix}
{\bf a}_{k+1}\\
{\bf b}_{k+1}\\
{\bf c}_{k+1}
\end{bmatrix}
-
\begin{bmatrix}
\nonumber
\widehat{{\bf a}}\\
\widehat{{\bf b}}\\
\widehat{{\bf c}}
\end{bmatrix}}_2^2=
\nonumber
\norm{\begin{bmatrix}
{\bf a}_{k}
\\{\bf b}_{k}
\\{\bf c}_{k}
\end{bmatrix}-
\begin{bmatrix}
\widehat{{\bf a}}\\
\widehat{{\bf b}}\\
\widehat{{\bf c}}
\end{bmatrix}
+\lambda_k
\begin{bmatrix}
-{\bf a}_{k}+[{\bf a}_{k}-\nabla_{{\bf a}}f]_{+}
\\-{\bf b}_{k}+[{\bf b}_{k}-\nabla_{{\bf b}}f]_{+}\\
-{\bf c}_{k}+[{\bf c}_{k}-\nabla_{{\bf c}}f]_{+}
\end{bmatrix}
}_2^2=\hspace*{-1cm}\\
\nonumber
\norm{\begin{bmatrix}
{\bf a}_{k}
\\{\bf b}_{k}
\\{\bf c}_{k}
\end{bmatrix}-
\begin{bmatrix}
\widehat{{\bf a}}\\
\widehat{{\bf b}}\\
\widehat{{\bf c}}
\end{bmatrix}}_2^2+
\norm{\lambda_k
\begin{bmatrix}
-{\bf a}_{k}+[{\bf a}_{k}-\nabla_{{\bf a}}f]_{+}
\\-{\bf b}_{k}+[{\bf b}_{k}-\nabla_{{\bf b}}f]_{+}\\
-{\bf c}_{k}+({\bf c}_{k}-\nabla_{{\bf c}}f]_{+}
\end{bmatrix}}_2^2
\hspace*{-1.2cm}\\+2\lambda_k
\begin{bmatrix}
{\bf a}_{k} -{\widehat{{\bf a}}}
\\{\bf b}_{k} -{\widehat{{\bf b}}}
\\{\bf c}_{k} -{\widehat{{\bf c}}}
\end{bmatrix}^T\begin{bmatrix}
-{\bf a}_{k}+({\bf a}_{k}-\nabla_{{\bf a}}f]_{+}
\\-{\bf b}_{k}+({\bf b}_{k}-\nabla_{{\bf b}}f]_{+}\\
-{\bf c}_{k}+({\bf c}_{k}-\nabla_{{\bf c}}f]_{+}
\end{bmatrix}.\hspace*{-.2cm}
\end{eqnarray}
From Lemma \ref{Lem1}, we have
\begin{eqnarray}\label{Formula_2}
\nonumber
\left(
\begin{bmatrix}
({\bf a}_{k}-\nabla_{{\bf a}}f]_{+}
\\({\bf b}_{k}-\nabla_{{\bf b}}f]_{+}\\
({\bf c}_{k}-\nabla_{{\bf c}}f]_{+}
\end{bmatrix}-
\begin{bmatrix}
({\bf a}_{k}-\nabla_{{\bf a}}f)
\\({\bf b}_{k}-\nabla_{{\bf b}}f)\\
({\bf c}_{k}-\nabla_{{\bf c}}f)
\end{bmatrix}\right)^T\times\hspace*{1.5cm}\\
\left(\begin{bmatrix}
({\bf a}_{k}-\nabla_{{\bf a}}f]_{+}
\\({\bf b}_{k}-\nabla_{{\bf b}}f]_{+}\\
({\bf c}_{k}-\nabla_{{\bf c}}f]_{+}
\end{bmatrix}
-
\begin{bmatrix}
{\bf a}_{k}\\
{\bf b}_{k}\\
{\bf c}_{k}
\end{bmatrix}
+\begin{bmatrix}
{\bf a}_{k}\\
{\bf b}_{k}\\
{\bf c}_{k}
\end{bmatrix}
-\begin{bmatrix}
\widehat{\bf a}\\
\widehat{\bf b}\\
\widehat{\bf c}
\end{bmatrix}\right)\leq 0,
\end{eqnarray}
and using some straightforward computations, from \eqref{Formula_2} we can get
{\small
\begin{eqnarray}\label{Formul_3}
\nonumber
\hspace{-4cm}\begin{bmatrix}
{\bf a}_{k}-\widehat{\bf a}\\
{\bf b}_{k}-\widehat{\bf b}\\
{\bf c}_{k}-\widehat{\bf c}
\end{bmatrix}^T
\begin{bmatrix}
-{\bf a}_{k}+({\bf a}_{k}-\nabla_{{\bf a}}f]_{+}
\\-{\bf b}_{k}+({\bf b}_{k}-\nabla_{{\bf b}}f]_{+}\\
-{\bf c}_{k}+({\bf c}_{k}-\nabla_{{\bf c}}f]_{+}
\end{bmatrix}\leq\\
-\begin{bmatrix}
\nabla_{{\bf a}}f\\
\nabla_{{\bf b}}f\\
\nabla_{{\bf c}}f
\end{bmatrix}^T
\begin{bmatrix}
-{\bf a}_{k}+({\bf a}_{k}-\nabla_{{\bf a}}f]_{+}
\\-{\bf b}_{k}+({\bf b}_{k}-\nabla_{{\bf b}}f]_{+}\\
-{\bf c}_{k}+({\bf c}_{k}-\nabla_{{\bf c}}f]_{+}
\end{bmatrix}
-\begin{bmatrix}
\nabla_{{\bf a}}f\\
\nabla_{{\bf b}}f\\
\nabla_{{\bf c}}f
\end{bmatrix}^T
\begin{bmatrix}
{\bf a}_{k}-\widehat{\bf a}
\\{\bf b}_{k}-\widehat{\bf b}\\
{\bf c}_{k}-\widehat{\bf c}
\end{bmatrix}
\nonumber
\\
\nonumber
-\norm{
\begin{bmatrix}
-{\bf a}_{k}+({\bf a}_{k}-\nabla_{{\bf a}}f]_{+}
\\-{\bf b}_{k}+({\bf b}_{k}-\nabla_{{\bf b}}f]_{+}\\
-{\bf c}_{k}+({\bf c}_{k}-\nabla_{{\bf c}}f]_{+}
\end{bmatrix}}_2^2\leq\\
\nonumber
-\begin{bmatrix}
\nabla_{{\bf a}}f\\
\nabla_{{\bf b}}f\\
\nabla_{{\bf c}}f
\end{bmatrix}^T
\begin{bmatrix}
-{\bf a}_{k}+({\bf a}_{k}-\nabla_{{\bf a}}f]_{+}
\\-{\bf b}_{k}+({\bf b}_{k}-\nabla_{{\bf b}}f]_{+}\\
-{\bf c}_{k}+({\bf c}_{k}-\nabla_{{\bf c}}f]_{+}
\end{bmatrix}
-\\
\norm{
\begin{bmatrix}
-{\bf a}_{k}+({\bf a}_{k}-\nabla_{{\bf a}}f]_{+}
\\-{\bf b}_{k}+({\bf b}_{k}-\nabla_{{\bf b}}f]_{+}\\
-{\bf c}_{k}+({\bf c}_{k}-\nabla_{{\bf c}}f]_{+}
\end{bmatrix}}_2^2,
\end{eqnarray}}
where $\nabla_{\bf a}f({\bf a}_k-\widehat{\bf a})\geq 0,\,\nabla_{\bf b}f({\bf b}_k-\widehat{\bf b})\geq 0,\,\nabla_{\bf c}f({\bf c}_k-\widehat{\bf c})\geq 0$, have been used in derivation of the last inequality in \eqref{Formul_3}. From Lemma \ref{Lem2} and \eqref{Formul_3}, we have
\begin{eqnarray}
\nonumber
\begin{bmatrix}
{\bf a}_{k}-\widehat{\bf a}\\
{\bf b}_{k}-\widehat{\bf b}\\
{\bf c}_{k}-\widehat{\bf c}
\end{bmatrix}^T
\begin{bmatrix}
-{\bf a}_{k}+({\bf a}_{k}-\nabla_{{\bf a}}f]_{+}
\\-{\bf b}_{k}+({\bf b}_{k}-\nabla_{{\bf b}}f]_{+}\\
-{\bf c}_{k}+({\bf c}_{k}-\nabla_{{\bf c}}f]_{+}
\end{bmatrix}\leq\\
\frac{1}{\lambda_k}\norm{
\begin{bmatrix}
\gamma_k^T\nabla_{\bf a}f\\
\gamma_k^T\nabla_{\bf b}f\\
\gamma_k^T\nabla_{\bf c}f
\end{bmatrix}
}
-\norm{
\begin{bmatrix}
-{\bf a}_{k}+({\bf a}_{k}-\nabla_{{\bf a}}f]_{+}
\\-{\bf b}_{k}+({\bf b}_{k}-\nabla_{{\bf b}}f]_{+}\\
-{\bf c}_{k}+({\bf c}_{k}-\nabla_{{\bf c}}f]_{+}
\end{bmatrix}}_2^2,
\end{eqnarray}
Let us now define the Lyapunov function defined as follows
\[
L({\bf a},{\bf b},{\bf c})=\norm{
\begin{bmatrix}
{\bf a}-\widehat{\bf a}\\
{\bf b}-\widehat{\bf b}\\
{\bf c}-\widehat{\bf c}
\end{bmatrix}
}_2^2,
\]
and for the stability of the iterative method, we need to satisfy 
\begin{equation}\label{Lyapu}
L({\bf a}_{k+1},{\bf b}_{k+1},{\bf c}_{k+1})-L({\bf a}_{k},{\bf b}_{k},{\bf c}_{k})\leq 0.
\end{equation}
It can be seen 
\begin{eqnarray}\label{Lyapu_2}
\nonumber
L({\bf a}_{k+1},{\bf b}_{k+1},{\bf c}_{k+1})-L({\bf a}_{k},{\bf b}_{k},{\bf c}_{k})\leq
2\norm{
\begin{bmatrix}
\gamma_k^T\nabla_{\bf a}f\\
\gamma_k^T\nabla_{\bf b}f\\
\gamma_k^T\nabla_{\bf c}f
\end{bmatrix}
}_2^2\\
+(\lambda_k^2-\lambda_k)\norm{
\begin{bmatrix}
-{\bf a}_{k}+({\bf a}_{k}-\nabla_{{\bf a}}f]_{+}
\\-{\bf b}_{k}+({\bf b}_{k}-\nabla_{{\bf b}}f]_{+}\\
-{\bf c}_{k}+({\bf c}_{k}-\nabla_{{\bf c}}f]_{+}
\end{bmatrix}}_2^2
\end{eqnarray}
Combining \eqref{Lyapu} and \eqref{Lyapu_2}, we can easily find an interval for $\lambda_k$ as 
\begin{equation}\label{interv}
\max(0,1-\sqrt{c})\leq\lambda_k\leq(1+\sqrt{c}),
\end{equation}
where
\begin{equation*}
c=\left(1-2\norm{\begin{bmatrix}\gamma^T_k\nabla_{\bf a} f\\\gamma^T_k\nabla_{\bf b} f\\\gamma^T_k\nabla_{\bf c} f\end{bmatrix}}_2^2\right)\left(\norm{\begin{bmatrix}{\bf a}_k-({\bf a}_k-\nabla_{\bf a}f]_{+}\\{\bf b}_k-({\bf b}_k-\nabla_{\bf b}f]_{+}\\{\bf c}_k-({\bf c}_k-\nabla_{\bf c}f]_{+}\end{bmatrix}}_2^2\right)^{-1}.
\end{equation*}
So, the DTPNN \eqref{CON_2}-\eqref{CON_223} is stable if \eqref{interv} is holds. According to  LaSalle’s invariance principle \cite{la1976stability}, it is known that the states of the DTPNN \eqref{CON_2}-\eqref{CON_223} converge to the largest invariant subset defined as
\begin{equation*}\label{Equ}
\Gamma=\{{\bf a}_k,{\bf b}_k,{\bf c}_k\in\mathbb{R}^n|L({\bf a}_{k+1},{\bf b}_{k+1},{\bf c}_{k+1})-L({\bf a}_{k},{\bf b}_{k},{\bf c}_{k})=0\}.
\end{equation*}
A triplet $({\bf a}_k,{\bf b}_k,{\bf c}_k)$ belongs to the set $\Gamma$ if 
\[
\norm{
\begin{bmatrix}
\gamma_k^T\nabla_{\bf a}f\\
\gamma_k^T\nabla_{\bf b}f\\
\gamma_k^T\nabla_{\bf c}f
\end{bmatrix}
}_2^2=\norm{
\begin{bmatrix}
-{\bf a}_{k}+({\bf a}_{k}-\nabla_{{\bf a}}f]_{+}
\\-{\bf b}_{k}+({\bf b}_{k}-\nabla_{{\bf b}}f]_{+}\\
-{\bf c}_{k}+({\bf c}_{k}-\nabla_{{\bf c}}f]_{+}
\end{bmatrix}}_2^2=0,
\]
and according to Lemma \ref{Lem2}, this means that the DTPNN \eqref{CON_2}-\eqref{CON_223} converges to its equilibrium points. 
Now, from \eqref{Lyapu}, we have
\[
\norm{
\begin{bmatrix}
{\bf a}_{k+1}-\widehat{\bf a}\\
{\bf b}_{k+1}-\widehat{\bf b}\\
{\bf c}_{k+1}-\widehat{\bf c}
\end{bmatrix}
}_2^2\leq\norm{
\begin{bmatrix}
{\bf a}_{k}-\widehat{\bf a}\\
{\bf b}_{k}-\widehat{\bf b}\\
{\bf c}_{k}-\widehat{\bf c}
\end{bmatrix}
}_2^2
\leq \cdots 
\norm{
\begin{bmatrix}
{\bf a}_{0}-\widehat{\bf a}\\
{\bf b}_{0}-\widehat{\bf b}\\
{\bf c}_{0}-\widehat{\bf c}
\end{bmatrix}
}_2^2,
\]
which shows that the triple $({\bf a}_k,{\bf b}_k,{\bf c}_k)$ is bounded in $\mathbb{R}^n$ and therefore there exists a subset $k_t$ for which 
\[
\lim_{k \to +\infty} 
\norm{
\begin{bmatrix}
{\bf a}_{k_t}-\widehat{\bf a}\\
{\bf b}_{k_t}-\widehat{\bf b}\\
{\bf c}_{k_t}-\widehat{\bf c}
\end{bmatrix}
}_2^2=0,
\]
and
\[
\lim_{k \to +\infty} 
\begin{bmatrix}
{\bf a}_{k_t}\\
{\bf b}_{k_t}\\
{\bf c}_{k_t}
\end{bmatrix}=\begin{bmatrix}
\widehat{\bf a}\\
\widehat{\bf b}\\
\widehat{\bf c}
\end{bmatrix}.
\]
This shows that the iterative method is also convergent. 

\end{document}


\maketitle





 






\bibliographystyle{siamplain}
\bibliography{references}